\title{The monochromatic stable Hopf invariant}
\author{Guozhen Wang}
\address{Department of Mathematics, MIT, 
Cambridge, MA 02139}
\email{guozhen@math.mit.edu}
\urladdr{www.math.mit.edu/$\sim$guozhen}
\newtheorem{thm}{Theorem}[section]
\newtheorem{cor}[thm]{Corollary}
\newtheorem{lem}[thm]{Lemma}
\newtheorem{ex}[thm]{Example}
\newtheorem{defi}{Definition}[section]
\newtheorem{rem}{Remark}[section]
\begin{document}

\begin{abstract}
 In this paper we will compute the effect of the James-Hopf map after applying the Bousfield-Kuhn functor on Morava $E$-theory, and then compute the monochromatic Hopf invariant of the $\beta$ family using this cohomological information.
\end{abstract}

\maketitle

\section{Introduction}

We will study the unstable homotopy of odd spheres from the chromatic point of view. The chromatic zero case is simple: the rational homotopy of odd spheres are the same as the stable one. So the interesting problem begins with chromatic height one.

The chromatic one component in the stable homotopy groups are the image of $J$. The behavior of these elements are studied in \cite{ma82}, \cite{gr84} and \cite{mt92}. In particular, it was shown by Mahowald and Thompson that, at an odd prime $p$, the $v_1$-inverted EHP sequence has a quite regular pattern. In this case when $v_1$ is inverted, there is an isomorphism from EHP spectral sequence to the Atiyah-Hirzebruch spectral sequence for the stable homotopy groups of $B\Sigma_p$, the classifying space of the symmetric group on $p$ objects. Furthermore, using the $K(1)$-local isomorphism of the sphere spectrum and $\Sigma^\infty B\Sigma_p$, we can identify the Atiyah-Hirzebruch spectral sequence with the $p$-Bockstein spectral sequence of the $K(1)$-local sphere, whose structure is well-known.

This paper is the first one in a series to understand the pattern of the EHP sequence at chromatic height two. In general, we have the Goodwillie tower of the identity functor extending the comparison of the unstable spheres and stable structure of $B\Sigma_p$ in the $K(1)$-local case. It was proved by Arone and Mahowald that at finite chromatic height the Goodwillie tower becomes finite. In particular, if we look at chromatic height two, the tower has only three layers. So it is reasonable to have a complete understanding of the whole tower, which can be viewed as a three term sequence of spectra.

This paper is concerned with understanding the first map in the sequence, which is the $K(2)$-localized James-Hopf map. We will describe its effect on $E_2$-cohomology, which will give information on the homotopy groups via Adams-Novikov spectral sequence. As an application of the formula, we will compute the second monochromatic stable Hopf invariant of the $\beta$ family. From the computations of Bendersky and others, it was suggested that the Hopf invariant of $\beta_{i/j,k}$ should be $\beta_{i-j/k}$. In this paper, we will show that, when a $\beta$ element does desuspend as expected, then the stable Hopf invariant is as suggested; otherwise its stable Hopf invariant has chromatic height at least three.

\subsection{Backgrounds and notations}

\subsubsection{Morava $E$-theory and monochromatic layers}

Fix a prime $p$. $K(n)$ denotes the Morava $K$-theory of height $n$. $E_n$ denotes the Morava $E$-theory, with homotopy group $\mathbb{Z}_p[[u_1,\dots,u_{n-1}]][u^{\pm}]$. We have $v_i=u_iu^{p^i-1}$ for $i=1,\dots,n-1$, and $v_n=u^{p^n-1}$.
We will denote by $F_n$ the formal group associated to $E_n$, and $+_{F_n}$ will mean the multiplication according to the formal group law $F_n$.

Let $L_{K(n)}$ be the localization functor with respect to Morava $K$-theory. We denote the functor of taking the $n^{th}$ monochromatic layer by $M_n$, which makes the following sequence a distinguished triangle: $M_nX\rightarrow L_{K(n)}X\rightarrow L_{K(n-1)}L_{K(n)}X$.

The functors $M_n$ and $L_{K(n)}$ factors through each other. We have $M_n=M_nL_{K(n)}$ and $L_{K(n)}=L_{K(n)}M_n$. The Morava $E$-theory of monochromatic layers of the sphere is computed in \cite{rav84}. We have $${E_n}_*M_n\mathbb{S}={E_n}_*/(p^\infty,\dots,v_{n-1}^\infty)$$
Hence we have $${E_n}_*M_nM(p)={E_n}_*/(p,v_1^\infty,\dots,v_{n-1}^\infty)$$ for the Moore spectrum $M(p)$, and in general $${E_n}_*M_nV(k)={E_n}_*/(p,\dots,v_k,v_{k+1}^\infty,\dots,v_{n-1}^\infty)$$ for the Smith-Toda complex $V(k)$.

\subsubsection{Periodic unstable homotopy and Bousfield-Kuhn functor}

We are concerned with unstable periodic homotopy in this paper. For any spaces $X$ and $W$ with base point, we denote by $\pi_i(X;W)$ to be the homotopy group of $Map_*(W,X)$, the mapping space from $W$ to $X$ preserving the base point. Suppose $W$ is a finite CW complex of type $n$, which means that $K(h)_*W=0$ for $h<n$ and $K(n)_*W$ is nontrivial. Then by the Hopkins-Smith periodicity theorem, for some suspension of $W$ there is a $v_n$-self map, i.e. there is a map $$v_n^t:\Sigma^{N+t|v_n|}W\rightarrow \Sigma^NW$$ for certain $t$ and $N$, which induce multiplication by $v_n^t$ in $K(n)_*W$. This map induces the multiplication by $v_n^t$ map on $\pi_i(X;W)$ for $i$ large enough. 

\begin{defi}
The $v_n$-periodic homotopy groups of $X$ with coefficients $W$ is defined to be $(v_n^t)^{-1}\pi_*(X;W)$. We will denote it by $v_n^{-1}\pi_*(X;W)$.
\end{defi}
Note that, since $v_n^t$ has positive degree, lower dimensional items do not affect the result of inverting $v_n$.
One can see that this notion only depends on (functorially) the suspension spectrum of $W$ when $X$ is fixed.

Now we let $$W_0\rightarrow W_1\rightarrow\cdots$$ to be a system of type $n$ spectra with colimit $\mathbb{S}/(p^{\infty},\dots,v_{n-1}^{\infty})$ with the top cell in dimension $0$. Then we can form an inverse system of groups $$v_n^{-1}\pi_*(X;W_0)\leftarrow v_n^{-1}\pi_*(X;W_1)\leftarrow\cdots$$

\begin{defi}
The $v_n$-periodic homotopy groups of $X$ are defined to be the inverse limit $\lim v_n^{-1}\pi_*(X;W_i)$. We denote it by $v_n^{-1}\pi_*(X)$.
\end{defi}
One can show that this notion depends only on $X$.

The Bousfield-Kuhn functor enables one to investigate the periodic homotopy groups using stable homotopy theory.
We will give a brief summary of its properties. See \cite{kuhn08} for details.

Let $W$ be a type $n$ complex. 
The $v_n$-self map induces a map $$Map_*(\Sigma^NW,X)\rightarrow Map_*(\Sigma^{N+t|v_n|}W,X)=\Omega^{t|v_n|}Map_*(\Sigma^NW,X)$$
Using this map, we form the pre-spectrum with $Map_*(\Sigma^NW,X)$ sitting in every place which are congruent to $-N$ mod $t|v_n|$, and the structure maps as above.
\begin{defi}
Define $\Phi_n^WX$, the Bousfield-Kuhn functor with coefficient $W$
to be the associated spectrum. 
\end{defi}
One can show that this functor only depends on the suspension spectrum of $W$.
As before we can take a system $W_0\rightarrow W_1\rightarrow\dots$ of type $n$ spectra tending to $\mathbb{S}/(p^{\infty},\dots,v_{n-1}^{\infty})$.
\begin{defi}
Define the Bousfield-Kuhn functor $\Phi_nX$ to be the inverse limit $\lim\Phi_n^{W_i}X$.
\end{defi}

One can see that we have $\pi_*\Phi_nX=v_n^{-1}\pi_*(X)$, when the $lim^1$ terms vanish, which holds for spheres.

We have variants of the Bousfield-Kuhn functor.
\begin{defi}
Define the $K(n)$-local Bousfield-Kuhn functor $\Phi_{K(n)}$ to be $L_{K(n)}\Phi_n$. Define the monochromatic Bousfield-Kuhn functor $\Phi_{M_n}$ to be $M_n\Phi_n$.
\end{defi}

From the definition, we have $\Phi_{K(n)}\Omega^\infty Y=L_{K(n)}Y$ for any spectrum $Y$.

\subsubsection{Monochromatic Hopf invariants}
We will investigate the Hopf invariant in this paper. The Hopf invariant is one of the three maps in the EHP sequence.  See Section 1.5 of \cite{rav86} for details of the EHP sequence.

In general, the Hopf invariant is defined for any space $X$ to be the adjoint of the map $\Sigma\Omega\Sigma X\rightarrow \Sigma X^{\wedge p}$, which is the projection map using Snaith splitting $\Sigma\Omega\Sigma X=\vee_i \Sigma X^{\wedge i}$, introduced in \cite{snaith74}.

We will study its stable analogue, which is the James-Hopf map $$jh:\Omega^{\infty}\Sigma^{\infty}X\rightarrow \Omega^{\infty}(\Sigma^{\infty}X^{\wedge p})_{h\Sigma_p}$$ 
defined to be the adjoint of the projection map $$\Sigma^{\infty}\Omega^{\infty}\Sigma^{\infty}X\rightarrow (\Sigma^{\infty}X^{\wedge p})_{h\Sigma_p}$$ using the Snaith splitting of $\Sigma^{\infty}\Omega^{\infty}\Sigma^{\infty}X$.

We will only look at the case when $X$ is a sphere. When specialized to the case $X=S^0$, we have the James-Hopf map $$jh:\Omega^{\infty}\Sigma^{\infty} S^0\rightarrow \Omega^{\infty}\Sigma^{\infty} {B\Sigma_p}_+$$
It is well-known that $\Sigma^{\infty} {B\Sigma_p}_+$ can be realized as a CW spectrum with cells in dimensions $0, 2(p-1)-1,2(p-1),4(p-1)-1,4(p-1),\dots$. We will denote by $P_k$ to be the quotient spectrum obtained from it by collapsing the first $k$ cells. Then the James-Hopf map for $S^k$ becomes $\Omega^{\infty+k}\Sigma^{\infty}S^k\rightarrow \Omega^{\infty}P_k$, and the suspension map on the left side is compatible with the quotient map on the right side. Note that $P_1=\Sigma^\infty
B\Sigma_p$.

The skeletal filtration on $P_1$ gives an Atiyah-Hirzebruch type spectral sequence: $$\oplus \pi_*(\mathbb{S})\Rightarrow \pi_*(P_1)$$ 
So any element in $\pi_*(P_1)$ has a representative in the $E_1$-term of this spectral sequence. Starting with an element in $\pi_*(\mathbb{S})$, the James-Hopf map sends it into an element in $\pi_*(P_1)$. We will call the representative in the $E_1$-term of the AHSS of the image to be the stable Hopf invariant. In other words, for any $\xi\in \pi_*(\mathbb{S})$, there is an largest $k$ when the image of $jh_*(\xi)$ under the map $P_1\rightarrow P_k$ is non-zero. Then the map $jh_*(\xi):\Sigma^{|\xi|}\mathbb{S}\rightarrow P_k$ can be lifted to a map $\Sigma^{|\xi|}\mathbb{S}\rightarrow \Sigma^{s_k}\mathbb{S}\rightarrow P_k$, where the last map is the inclusion of the bottom cell. Then the stable Hopf invariant of $\xi$ is the homotopy class represented by the first map.
Using the compatibility of the suspension and the filtration on $P_1$, one can see that the stable Hopf invariant gives a lower bound on the sphere of origin for stable elements, and when the lower bound is achieved it detects the Hopf invariant.

This paper is concerned with the monochromatic stable Hopf invariant. We will apply the Bousfield-Kuhn functor $\Phi_{M_n}$ to the James-Hopf map $jh:\Omega^{\infty}\mathbb{S}\rightarrow \Omega^{\infty}P_1$ to get the monochromatic James-Hopf map $$\Phi_{M_n}jh:M_n\mathbb{S}\rightarrow M_nP_1$$ 
We know $M_nP_1$ is the direct limit of its skeleton, so we have the monochromatic analogue of the AHSS computing the monochromatic homotopy groups: $$\oplus \pi_*(M_n\mathbb{S})\Rightarrow \pi_*(M_nP_1)$$ 
So taking the representative in the $E_1$-term of the image under the monochromatic James-Hopf map gives the monochromatic stable Hopf invariant. Hence we have the diagram $$\begin{array}{ccc}\pi_*M_n\mathbb{S}&\rightarrow& \pi_*M_n P_1 \\
\Downarrow&&\downarrow\\\pi_*M_n\Sigma^{s_k}\mathbb{S}&\rightarrow&\pi_*M_nP_k
\end{array}$$ 
with the upper arrow the James-Hopf map, the lower arrow the inclusion of the bottom cell, and the lifting problem for the left arrow gives the monochromatic Hopf invariant.

\subsubsection{Morava $E$-theory of some classifying spaces} \label{ebg}

We will need the Morava $E$-theory of $B\Sigma_p$, the classifying space of the symmetric group. The details of the computations  can be found in \cite{str98}.

Recall that, using the fibration $B\mathbb{Z}/p\rightarrow \mathbb{C}P^\infty\xrightarrow{p}\mathbb{C}P^\infty$ we can identify $E_n^*B\mathbb{Z}/p$ with $E_n[\xi]/[p](\xi)$, with $[p](\xi)$ the $p$-series of the formal group associated to $E_n$.

The $p$-Sylow subgroup of $\Sigma_p$ is the subgroup $\mathbb{Z}/p$ generated by the cyclic permutation. Its Weyl group can be identified with $\mathbb{F}_p^\times$, which is a cyclic group of order $p-1$. The restriction map $E_n^*B\Sigma_p\rightarrow E_n^*B\mathbb{Z}/p$ lands in the invariants of $\mathbb{F}_p^\times$. It turns out that we have the isomorphism $E_n^*B\Sigma_p=(E_n^*B\mathbb{Z}/p)^{\mathbb{F}_p^\times}$. The generator $\xi$ lies in cohomological degree $2$, and has weight $1$ for the action of $\mathbb{F}_p^\times$. So we have $E_n^*B\Sigma_p=E_n^*[x]/xq(x)$, where $x=\xi^{p-1}$ and $q(\xi^{p-1})=\frac{[p](\xi)}{\xi}$.
Consequently $E_n^*P_1=xE_n^*[x]/q(x)$.

\subsubsection{Other conventions} \label{ocv}

We will study several $\mathcal{E}_\infty$ and co-$\mathcal{E}_\infty$ algebras and coalgebras in this paper. To make the notion easier, we will work in the category of $S$-modules in the sense of \cite{ekmm}. By $\mathcal{E}_\infty$ and co-$\mathcal{E}_\infty$ we will simply mean commutative and co-commutative respectively in the category of $S$-modules. 
We will denote the free $\mathcal{E}_\infty$-algebra generated by a spectrum $X$ by $\mathcal{E}_\infty(X)$. The free commutative algebra generated by $X$ will be denoted $S(X)$, and it will be equivalent to $\mathcal{E}_\infty(X)$ in the model of spectra we will use.

We will introduce the (homotopy) category of weakly graded spectra. The objects are sequences of spectra $(X_i)_{i\in\mathbb{Z}}$. For morphisms, let $X=(X_i)$ and $Y=(Y_i)$ be two objects in the category. Then a map can be identified with a matrix $(f_{ij})$ for $f_{ij}:X_i\rightarrow Y_j$. Then this map is a morphism of weakly graded spectra, if for fixed $i$ there are only finitely many $j$ with $f_{ij}$ nontrivial, and for fixed $j$, there are only finitely many $i$ with $f_{ij}$ nontrivial.

We will use $D$ for Spanier-Whitehead dual. For a graded spectrum $(X_i)_{i\in\mathbb{Z}}$, we introduce the restricted Spanier-Whitehead dual functor, denoted by $D'$, to be the direct sum of the Spanier-Whitehead dual of its homogeneous pieces, i.e. the sequence $(Y_i)_{i\in\mathbb{Z}}$ with $Y_i=DX_{-i}$. One sees that the functor $D'$ can be defined on the category of weakly graded spectra.

We also introduce the (homotopy) category of weak infinite loop spaces. The objects are labeled by pointed connected spaces. For any two objects labeled by $X$ and $Y$, a morphism is a (homotopy class of) pointed map from $\Omega^\infty\Sigma^\infty X$ to $\Omega^\infty\Sigma^\infty Y$ such that the adjoint $\Sigma^\infty\Omega^\infty X\rightarrow \Sigma^\infty Y$ has only finitely many (homotopically) nontrivial components when the domain is decomposed using the Snaith splitting. There is a functor to the category of spaces by taking $X$ into $\Omega^\infty\Sigma^\infty X$, and the composition of this functor with infinite suspension lands in the category of weakly graded spectra.

The symbols $\prod$ and $\coprod$ will always mean the categorical product and coproduct respectively.

\subsection{Statement of the main results}\label{mr} 

We will be mainly concerned with the James-Hopf map for $S^1$ after applying the Bousfield-Kuhn functor, i.e. $$jh:L_{K(n)}\mathbb{S}\rightarrow L_{K(n)}\Sigma^\infty B\Sigma_p$$
First we will compute the effect of the James-Hopf map on the $E_n$-cohomology.
\begin{thm} \label{sfl}
The map on $E_n$-cohomology induced by the James-Hopf map $jh$ is as follows:
Let $x_1,x_2,\dots,x_{p^{n-1}+\dots+p+1}$ be the roots (over some extension of $E_n^*$) of the polynomial $q(x)$ (which is defined in Section \ref{ebg}). Then for any polynomial $f(x)$ in $x(E_n^*[x]/q(x))$,
we have $jh^*(f(x))$ equals $$\frac{1}{p}\sum_i{f(x_i)}$$ up to a factor which is a unit.
\end{thm}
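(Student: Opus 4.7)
The plan is to combine Hopkins-Kuhn-Ravenel character theory with an identification of the James-Hopf map in terms of the total power operation on Morava $E$-theory. This proceeds in two stages: first, show that $jh^*(f(x))$ has the form $\sum_i c_i f(x_i)$ with the $c_i$ Galois-conjugate to each other; then upgrade to $c_1=\cdots=c_N=u/p$ for a single unit $u\in E_n^*$, where $N=p^{n-1}+\cdots+p+1$.

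For the first stage, I apply HKR character theory to embed $E_n^*B\Sigma_p\otimes_{E_n^*}L_n\hookrightarrow\prod_\alpha L_n$, indexed by conjugacy classes of homomorphisms $\alpha:\mathbb{Z}_p^n\to\Sigma_p$. These classes consist of the trivial one, together with $N$ classes factoring through an inclusion $\mathbb{Z}/p\hookrightarrow\Sigma_p$; the latter are parametrized (up to $\Sigma_p$-conjugation) by the $\mathbb{F}_p^\times$-orbits on $\mathbb{F}_p^n\setminus\{0\}$. Under the character map, the class $x=\xi^{p-1}$ corresponds to the tuple $(0,x_1,\ldots,x_N)$, where the $x_i$ run over the roots of $q(x)$; consequently any $f(x)\in xE_n^*[x]/q(x)$ becomes $(0,f(x_1),\ldots,f(x_N))$. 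The extension $jh^*_{L_n}$ is $L_n$-linear on this product, and $GL_n(\mathbb{Z}_p)$-equivariance (via its quotient $GL_n(\mathbb{F}_p)$, which acts transitively on the nontrivial characters) forces the coefficients $(c_i)$ to form a single Galois orbit. This yields $jh^*(f(x))=\sum_i c_i f(x_i)$ with Galois-conjugate $c_i$; the resulting sum is $GL_n$-invariant, hence lies in $E_n^*$, which accounts for the divisibility of $\sum_i f(x_i)$ by $p$ suggested by the statement.

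For the second stage, the goal is to identify the $c_i$ with a single $c=u/p$ for a unit $u\in E_n^*$. The plan is to use the construction of the James-Hopf map from the Snaith splitting, together with the Ando-Hopkins-Strickland theory of $H_\infty$-structures on Morava $E$-theory, to identify $jh^*$ explicitly with an operation involving the total power operation whose effect on the $E_n$-cohomology of $B\Sigma_p$ is pinned down by Strickland's formulas. An alternative is to use Rezk's $K(n)$-local logarithmic cohomology operation together with Kuhn's interpretation of $\Phi_n$ applied to the James-Hopf map, which produces the $1/p$ prefactor directly. A third route is to compose $jh$ with the Becker-Gottlieb transfer $\Sigma^\infty_+B\Sigma_p\to\mathbb{S}$ and compare the resulting endomorphism of $L_{K(n)}\mathbb{S}$ with Strickland's residue description of $tr^*$, which likewise fixes the normalization.

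The main obstacle is this second stage. The character-theoretic first stage is essentially formal and gives the qualitative shape of $jh^*$, but isolating the precise $p$-adic normalization requires importing a nontrivial input about the James-Hopf map, either from its universal description via the Snaith splitting and the total power operation, or from a transfer computation. Once the coefficient $c=u/p$ is established, $L_n$-linearity and Galois descent back to $E_n^*$ give the claimed formula.
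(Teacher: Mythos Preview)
Your proposal has a genuine gap: Stage~2 carries essentially all of the content and you do not execute it. You list three possible routes---total power operations \`a la Ando--Hopkins--Strickland, Rezk's logarithm, or a transfer comparison---but each of these requires precisely the structural identification of $\Phi_{K(n)}(jh)$ that is the real work here; none is a shortcut. Moreover, Stage~1 does not establish even the \emph{shape} of the formula. Your equivariance argument gives $g(c_j)=c_{\sigma_g(j)}$, so the $c_i$ form a Galois-coherent family, but this does not force them to be equal: the stabilizer in $GL_n(\mathbb{F}_p)$ of a point of $\mathbb{P}^{n-1}(\mathbb{F}_p)$ is a proper parabolic $P_i$, and $L_n^{P_i}$ is strictly larger than $p^{-1}E_n^*$ (it is essentially the rationalization of the ring classifying a single order-$p$ subgroup). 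So after Stage~1 you only know $jh^*(f)=\sum_i c_i f(x_i)$ with $(c_i)$ a twisted family, which is strictly weaker than $c\sum_i f(x_i)$; showing the $c_i$ coincide is part of the missing Stage~2, not merely a normalization.

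The paper does not separate the problem into shape and normalization. Instead it uses the Behrens--Rezk identification of $\Phi_{K(n)}$ with topological Andr\'e--Quillen cohomology, together with an analysis of the commutative/cocommutative bi-algebra structure on $S(X)$ and its restricted dual, to prove (Theorem~\ref{jhc}) that the dual of $\Phi_{K(n)}(jh)$ is, up to a correction term vanishing for odd spheres, the composite
\[
(DX^{\wedge p})^{h\Sigma_p}\ \xleftarrow{\ \mathrm{norm}\ }\ (DX^{\wedge p})_{h\Sigma_p}\ \xrightarrow{\ \mathrm{mult}\ }\ DX.
\]
Both the equality of the $c_i$ and the factor $\tfrac{1}{p}$ then fall out of a direct computation of the norm map for $X=\mathbb{S}$: using the HKR formula $tr(1)=[p](\xi)/\xi$ and the double coset formula for $\mathbb{Z}/p\subset\Sigma_p$, one finds over a characteristic-zero extension that the bilinear form defined by the norm is $\langle f,g\rangle=\tfrac{1}{p!}f(0)g(0)+\tfrac{1}{p(p-1)}\sum_{P\neq 0}f(P)g(P)$, and pairing with the restriction image of $1$ yields the trace formula with the stated constant. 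Your proposed routes for Stage~2 are not implausible, but making any of them precise amounts to reproving Theorem~\ref{jhc}.
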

\begin{rem}
The previous map is essentially the map $\frac{tr}{p}$, with $tr$ the trace map for the finite extension $E_n^*\rightarrow E_n^*[x]/q(x)$. Since the constant term in $q(x)$ is divisible by $p$, we always get integral results. 
\end{rem}
\begin{rem}
There is the dual description in \cite{rez13}. It is comparable with the previous one if we relate them by the Weil pairing. We will explain this relation in detail later.
\end{rem}

Recall that $\Sigma^\infty B\Sigma_p$ can be realized as a CW spectrum with cells in dimensions $2(p-1)-1, 2(p-1), 4(p-1)-1, 4(p-1), \dots$. We define an increasing filtration where the $k^{th}$ filtration constitutes the cells of dimension up to $2k(p-1)$. This amounts to a filtration on $\Sigma^\infty B\Sigma_p$, such that the $k^{th}$ sub-quotient is the suspended Moore spectrum $\Sigma^{k|v_1|-1}M(p)$. Using this filtration we get an Atiyah-Hirzebruch type spectral sequence $$\pi_t\Sigma^{k|v_i|-1}{E_n}_*M_nM(p)\Rightarrow\pi_{t+k|v_i|-1}({E_n}_*M_n\Sigma^\infty B\Sigma_p)$$
We will describe the cohomological Hopf invariant of the $\beta$-family by looking at the element detecting $jh_*(\frac{v_2^k}{p^iv_1^j})$ in the AHSS for $M_n\Sigma^\infty B\Sigma_p$.

\begin{thm}\label{hlhi}
Let $p$ be odd prime. Let $\frac{v_2^k}{p^{i+1}v_1^j}$, with $p^i|j$, be an element in the $E_2$-homology of the $2^{nd}$ monochromatic layer of $\mathbb{S}$. Then its image under $jh_*$ lies in the $(pj+i+1)^{st}$ filtration, detected by $\frac{v_2^{k-j}}{v_1^{i+1}}$ in AHSS as an element in the $E_2$-homology of the $2^{nd}$ monochromatic layer of the corresponding sub-quotient, which is a suspended Moore spectrum.
\end{thm}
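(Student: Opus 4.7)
The plan is to derive the homological effect of the monochromatic James-Hopf map by dualizing Theorem~\ref{sfl}, and then read off the Atiyah-Hirzebruch filtration of the image of the Bockstein element directly.

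First I would set up the duality between the skeletal filtration on $P_1$ and the $x$-adic filtration on $E_2^*P_1 = xE_2^*[x]/q(x)$. The sub-quotients of the former are $\Sigma^{k|v_1|-1}M(p)$, and the cohomological filtration $F^m = (x^m)\cap E_2^*P_1$ annihilates the $m$-th homological filtration under Spanier--Whitehead pairing; the projection of a homology class $\alpha$ onto the $m$-th AHSS sub-quotient is detected via its pairing with $x^m$. Applied to $\alpha = jh_*\bigl(\tfrac{v_2^k}{p^{i+1}v_1^j}\bigr)$ and combined with Theorem~\ref{sfl}, the task reduces to understanding the action of $\tfrac{1}{p}p_m \in E_2^*$ on $\tfrac{v_2^k}{p^{i+1}v_1^j}\in{E_2}_*M_2\mathbb{S}=E_2^*/(p^\infty,v_1^\infty)$, where $p_m=\sum_i x_i^m$ is the $m$-th Newton power sum of the roots of $q(x)$.

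Next I would evaluate $p_m$ via Newton's identities from the coefficients of $q(x)$, which come from the $p$-series of $F_2$. Modulo decomposables we have $q(x)\equiv p+v_1 x+v_2 x^{p+1}$, giving a linear recursion for $p_m$ whose leading coefficients are essentially $v_1/v_2$ and $p/v_2$. Dividing by $p$ (which is integral since $p$ equals the constant term of $q(x)$, as noted in the remark after Theorem~\ref{sfl}) and then extracting the monomial that pairs with $\tfrac{v_2^k}{p^{i+1}v_1^j}$, the combinatorics of the recursion forces the minimal $m$ with nonzero pairing to be $m=pj+i+1$, with leading coefficient a unit multiple of $\tfrac{v_2^{k-j}}{v_1^{i+1}}$ in the sub-quotient $\Sigma^{(pj+i+1)|v_1|-1}M(p)$. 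The divisibility hypothesis $p^i\mid j$ enters at this stage as the combinatorial condition that guarantees the relevant contribution is not absorbed by higher-order Bockstein noise.

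The main obstacle will be the $(p,v_1,v_2)$-adic analysis of the Newton power sums $p_m$, controlling the higher-order corrections to $q(x)$ beyond the three-term approximation above, and making rigorous the completed pairing between $E_2^*$ and ${E_2}_*M_2\mathbb{S}$. A clean organizational device is to interpret the roots $x_i$ of $q(x)$ as the $p$-torsion lines of the formal group $F_2$, as in the Weil-pairing/Rezk-dual description referenced after Theorem~\ref{sfl}; this makes the power sums natural quantities in the theory of formal groups and reduces the remaining work to a tractable formal-group calculation.
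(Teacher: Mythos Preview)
Your outline is essentially the paper's own strategy: dualize the skeletal filtration, pair with $jh^*(y^m)=\tfrac{1}{p}s_m$ via Theorem~\ref{sfl}, and then analyse the Newton power sums of the roots of $q$. One slip: you wrote ``the minimal $m$ with nonzero pairing'', but the filtration question is the \emph{largest} such $m$ --- the image lies in filtration $k$ exactly when $\langle\mu,jh^*(y^m)\rangle=0$ for all $m>k$ and is nonzero at $m=k$. This matters because the whole argument is an optimisation over $m$, and the direction of the inequality drives every estimate.

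The real content of the theorem, however, is exactly the part you flag as an ``obstacle'' and then defer: the $(p,v_1)$-adic analysis of the Newton sums. Your three-term approximation $q(x)\equiv p+v_1x+v_2x^{p+1}$ is not quite what is needed; the paper first applies Weierstrass preparation and shows that the Weierstrass polynomial $q_0(y)=y^{p+1}+c_1y^p+\dots+c_{p+1}$ satisfies $c_{p+1}=p\cdot(\text{unit})$, $c_p=v_1\cdot(\text{unit})$, and $c_i\in(pv_1)$ for $i<p$, working modulo $(pv_1)$ rather than ``modulo decomposables''. Given this, the closed Newton formula
\[
s_k=\sum_{\sum ir_i=k}\frac{k(\sum r_i-1)!}{\prod r_i!}\prod(-e_i)^{r_i}
\]
produces infinitely many candidate terms, and the proof requires two nontrivial lemmas: (i) among all terms with nonzero pairing against $\tfrac{1}{p^{s+1}v_1^t}$, the one with maximal $k$ has $r_i=0$ for $i<p$ (proved by a delicate swap argument using two choices $l=p,p+1$ and a $p$-adic valuation trick), so only the monomials $\tfrac{1}{p}\tfrac{k(s+t-1)!}{s!t!}p^sv_1^t$ with $k=(p+1)s+pt$ survive; and (ii) a binomial-coefficient estimate to rule out the regime $s+t>j$ and pin down $(s,t)=(i+1,\,j-i-1)$, which is precisely where the hypothesis $p^i\mid j$ is used. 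Neither step is routine, and neither is addressed in your proposal beyond the phrase ``the combinatorics of the recursion forces''. The Weil-pairing reformulation you mention is used in the paper only for comparison with Rezk's description and does not shortcut these estimates.
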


To get the homotopical Hopf invariant, we need to deal with ANSS differentials. However, in the case of $p\geq5$, there are no ANSS differentials, so that we get the monochromatic Hopf invariants of the $\beta$-family, which fits the computations of Bendersky and others:
\begin{thm}\label{k2hi}
For $p\geq5$, the $2^{nd}$ monochromatic stable Hopf invariant of $\beta_{k/j,i}$ is $\beta_{k-j/i}$ supported on the top cell of the $(pj+i)^{th}$ sub-quotient Moore spectrum.
\end{thm}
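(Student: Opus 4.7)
The plan is to bridge from the $E_n$-homological Hopf invariant computed in Theorem \ref{hlhi} to the homotopical Hopf invariant by means of the chromatic Adams--Novikov spectral sequence (ANSS), exploiting its collapse on the $\beta$-family at $p \geq 5$. First I would recall that the $\beta$-family elements in $\pi_* M_2 \mathbb{S}$ arise via the ANSS from chromatic $E_2$-term classes: $\beta_{k/j,i}$ is represented by (a unit multiple of) $\frac{v_2^k}{p^{i+1} v_1^j}$ in the chromatic $E_2$-term built from $E_2^*/(p^\infty,v_1^\infty)$. Analogously, the top-cell class $\beta_{k-j/i}$ on the $(pj+i)$-th sub-quotient Moore spectrum $\Sigma^{(pj+i)|v_1|-1}M(p)$ is represented by $\frac{v_2^{k-j}}{v_1^{i+1}}$ in the ANSS for $M_2 M(p)$, whose relevant $E_2$-term comes from $E_2^*/(p,v_1^\infty)$.

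Next I would feed $\frac{v_2^k}{p^{i+1}v_1^j}$ into Theorem \ref{hlhi} to read off the AHSS-filtration and the sub-quotient representative of $jh_*$ of this class at the level of $E_n$-homology: it places the image in AHSS-filtration $(pj+i+1)$ with sub-quotient representative $\frac{v_2^{k-j}}{v_1^{i+1}}$, matching precisely the $E_2$-representative of $\beta_{k-j/i}$ on the top cell of the prescribed Moore-spectrum sub-quotient. Using the naturality of the ANSS with respect to both $\Phi_{M_2} jh$ and the AHSS filtration on $M_2 P_1$, one obtains the desired identification on the $E_2$-page of the spectral sequence built by applying ANSS filtration-wise to the AHSS for $M_2 P_1$.

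The final step is to upgrade this $E_2$-level identification to a genuine homotopical statement. The main obstacle here is the potential presence of ANSS differentials: a priori the class $\beta_{k-j/i}$ on the prescribed sub-quotient could be killed, or the effective AHSS-filtration of $jh_*(\beta_{k/j,i})$ could jump because of lower-filtration contributions that are only differentially trivial. To handle this I would invoke the computation of the ANSS for the $K(2)$-local sphere and the $K(2)$-local Moore spectrum at primes $p \geq 5$, due to Shimomura--Yabe and collaborators, which shows that the relevant $\beta$-family generators are permanent cycles and are not hit by any differential; structurally, this is forced by the sparsity of the ANSS $E_2$-term at $p \geq 5$, which leaves the bidegrees that could host such differentials empty. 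Combining this collapse with the $E_n$-homology computation above yields exactly the homotopical statement that the $2^{\text{nd}}$ monochromatic stable Hopf invariant of $\beta_{k/j,i}$ is $\beta_{k-j/i}$ supported on the top cell of the $(pj+i)$-th sub-quotient Moore spectrum.
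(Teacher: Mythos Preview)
Your proposal is correct and follows essentially the same route as the paper: invoke Theorem \ref{hlhi} for the $E_2$-level computation, then use sparsity of the Adams--Novikov spectral sequence at $p\geq 5$ to rule out any filtration jump. Two small points of precision where the paper is sharper. First, the sparsity you need is not for the sphere or Moore spectrum per se but for $M_2 P_1$: the paper proves a dedicated lemma that in stems $\equiv -1,-2 \pmod{2(p-1)}$ the ANSS $E_2$-term for $M_2 P_1$ (assembled via AHSS from $M_2 V(1)$ and $M_2 V(0)$, using the data in \cite{bk2} rather than Shimomura--Yabe) is concentrated in Adams--Novikov filtration $0$ and consists only of classes $\frac{v_2^a}{v_1^b}$; hence there is nothing of higher filtration that could detect $jh_*(\beta_{k/j,i})$, and the filtration-$0$ answer from Theorem \ref{hlhi} is forced. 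Second, a minor indexing slip: in the paper's convention $\beta_{k/j,i}$ has leading term $\frac{v_2^k}{p^{i}v_1^{j}}$, so Theorem \ref{hlhi} places the image in filtration $pj+i$ with detected class $\frac{v_2^{k-j}}{v_1^{i}}$, matching the statement of Theorem \ref{k2hi}, rather than your $pj+i+1$ and $\frac{v_2^{k-j}}{v_1^{i+1}}$.
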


\subsection{Outline of the proof}

Our goal is to compute the effect of the Bousfield-Kuhn functor on the James-Hopf map. We will use the TAQ interpretation of the Bousfield-Kuhn functor introduced in \cite{br12} to compute the Bousfield-Kuhn functor.

Let $X$ be a space. Then the Snaith splitting says that the suspension spectrum of $\Omega^\infty\Sigma^\infty X$ is the free $\mathcal{E}_\infty$ spectrum generated by the suspension spectrum of $X$. The diagonal map on $\Omega^\infty\Sigma^\infty X$ gives a co-$\mathcal{E}_\infty$ coproduct on $\Sigma^\infty\Omega^\infty\Sigma^\infty X$. Then from the formulas in \cite{kuhn00} we find that this coproduct induces an $\mathcal{E}_\infty$ product on its restricted Spanier-Whitehead dual $D'\Sigma^\infty\Omega^\infty\Sigma^\infty X$. From \cite{gk95} we know that the norm map gives a $K(n)$-local equivalence of homotopy orbits and homotopy invariants, together with the formula of the coproduct in \cite{kuhn00}, we conclude that $D'\Sigma^\infty\Omega^\infty\Sigma^\infty X$ is $K(n)$-locally equivalent to the $\mathcal{E}_\infty$ spectrum generated by $D\Sigma^\infty X$. However, the natural gradings are not preserved under this equivalence. From \cite{br12} we know that the Bousfield-Kuhn functor on $\Omega^\infty\Sigma^\infty X$ can be identified with taking the dual of the indecomposables in $D'\Sigma^\infty\Omega^\infty\Sigma^\infty X$, which means taking the first homogeneous piece under the grading induced from $\mathcal{E}_\infty DX$. Howerver, the usual way of describing the cohomology of $\Omega^\infty\Sigma^\infty X$, including the formula for the James-Hopf map on cohomology in \cite{kp85}, is using the grading induced by the Snaith splitting. So the problem of computing the monochromatic stable Hopf invariant on cohomology reduces to understand the relations of these two gradings so that we can use the formula in \cite{kp85}.

We will show that the comparison can be described using the product on $D\Sigma^\infty X$ and the norm map, which would lead to the formula in Theorem \ref{sfl}.

In order to compute the monochromatic Hopf invariant using Theorem \ref{sfl}, we will try to compute the leading term of the power sum of the roots of the $p$-series. The leading terms of the $p$-series is $v_2\xi^{p^2}+v_1\xi^p+p$. Using the Newton identity, the power sum of its roots would be a sum of terms of the form $\frac{k(s+t-1)!}{s!t!}p^sv_1^t$ with $k=(p+1)s+pt$. Naively we would expect the leading term should have no powers of $p$ introduced from the coefficient, which leads to the formula in Theorem \ref{hlhi}. We will show that this naive consideration does give the correct leading term.

With the cohomological computations in hand, Theorem \ref{k2hi} follows once we observe that there are no obstruction for large primes.

\subsection{Organization of the paper}

In Section \ref{bc}, we will discuss the structure of the suspension spectrum of $\Omega^\infty\Sigma^\infty X$, as an $\mathcal{E}_\infty$ and co-$\mathcal{E}_\infty$ bi-algebra, in detail. We will finally arrive at Theorem \ref{jhc}, which says that the James-Hopf map, after applying the Bousfield-Kuhn functor, is essentially the composition of the inverse of the norm map and the multiplication map for $DX$. 

Then in Section \ref{dc}, we will discuss the norm map, give a formula for the quadratic form it induces on homology, and prove Theorem \ref{sfl}.

With the formula, we will do the computations in Section \ref{mh} for the homological monochromatic Hopf invariant for the $\beta$ family, proving Theorem \ref{hlhi}. 

We will compare our formula with the modular isogeny complex using the Weil pairing in Section \ref{cmp}.

In Section \ref{hb}, we will show that for $p\geq5$, there are no obstructions preventing these homological computations to imply homotopical ones, proving Theorem \ref{k2hi}. 

In the last Section \ref{ah}, we will give a remark on how to get the actual Hopf invariant from the monochromatic information. The main point is that we can construct an unstable chromatic tower. The convergence issue of this tower is left open.

\subsection{Acknowledgments}

The author would like to thank Mark Behrens, Zhen Huan and Haynes Miller for discussions and suggestions. Especially Mark Behrens, who has review the paper and pointed out many places which can be improved.

\section{The behavior of the James-Hopf map on cohomology} \label{bc}

We will describe a method to determine the map on cohomology induced by the James-Hopf map after applying the Bousfield-Kuhn functor.

First recall that by Snaith splitting, $\Sigma^\infty\Omega^\infty\Sigma^\infty X\cong \coprod_{i=1}^{\infty}(\Sigma^\infty X)^{\wedge i}_{h\Sigma_i}$ for any connected pointed space $X$, and the equivalence is induced from the map $\Sigma^\infty X\rightarrow \Sigma^\infty(\Omega^\infty\Sigma^\infty X)_+$  by extending it into an $\mathcal{E}_\infty$ map. See \cite{kuhn06} for details.

Fix a prime $p$. The Jame-Hopf map $$jh: \Omega^\infty\Sigma^\infty X\rightarrow \Omega^\infty (\Sigma^\infty X)^{\wedge p}_{h\Sigma_p}$$ is the adjoint of the projection map $$\Sigma^\infty\Omega^\infty\Sigma^\infty X\rightarrow (\Sigma^\infty X)^{\wedge p}_{h\Sigma_p}$$  
By definition the infinite suspension of the James-Hopf map lands in the category of weakly graded spectra (see section \ref{ocv} for the definition). We will study the effect of the Bousfield-Kuhn functor on this map, $$\Phi_{K(n)}jh:L_{K(n)}\Sigma^\infty X\rightarrow L_{K(n)}(\Sigma^\infty X)^{\wedge p}_{h\Sigma_p}$$ 

By results of \cite{br12}, we can identify $\Phi_{K(n)}X$ with $$TAQ_{L_{K(n)}\mathbb{S}}(L_{K(n)}D\Sigma^\infty\Omega^\infty\Sigma^\infty X_+)$$ 
the topological Andr\'{e}-Quillen cohomology of the $K(n)$-local $\mathcal{E}_\infty$ spectrum $L_{K(n)}D\Sigma^\infty\Omega^\infty\Sigma^\infty X_+$ where the multiplication is induced from the diagonal on $\Omega^\infty\Sigma^\infty X$.

In order to apply the theorem, we need to understand the $\mathcal{E}_\infty$ structure on $D\Sigma^\infty\Omega^\infty\Sigma^\infty X_+$ induced from the diagonal map on $\Omega^\infty\Sigma^\infty X$. The diagonal map is determined by \cite{kuhn00}. Essentially, the diagonal map on the component $\Sigma^\infty X$ in the Snaith splitting is the sum of two maps: the primitive part $X\rightarrow (\mathbb{S}\wedge X)\coprod (X\wedge \mathbb{S})$, and the other part  $X\rightarrow X\wedge X$ coming from the diagonal map on $X$.

We will work at a slightly more general setting. To simplify the argument, we will work in the category of $S$-modules in the rest of this section, so that we have a symmetric monoidal smash product, and for cofibrant $X$, the action of $\Sigma_k$ on $X^{\wedge k}$ is free.
We will also work in the $K(n)$-local category, and $K(n)$-localization will be applied implicitly when necessary.

Let $X$ be a cocommutative $S$-coalgebra with structure map $\psi_X:X\rightarrow X\wedge X$. Suppose that $X$ is cofibrant and $E_n^*(X)$ is free of finite rank over $E_n^*$. Hence in the $K(n)$-local category, $X$ is dualizible by \cite{hs99}. We can construct the free commutative $S$-algebra $S(X)=\oplus_k X^{\wedge k}/\Sigma_k$ generated by $X$. We have a cocommutative $S$-coalgebra structure on $S(X)$ defined by extending the composition of the following map 
$$X\xrightarrow{1\wedge id+id\wedge 1 +\psi_X}(\mathbb{S}\wedge X)\oplus (X\wedge \mathbb{S})\oplus (X\wedge X)\rightarrow S(X)\wedge S(X)$$
into a map of commutative $S$-algebras. (The last map is the inclusion of summand $(\mathbb{S}\wedge X)\oplus (X\wedge \mathbb{S})\oplus (X\wedge X)\rightarrow\oplus_{i,j} (X^{\wedge i})\wedge (X^{\wedge j})/\Sigma_i\times\Sigma_j$.)

Note that in this case, we have a natural decreasing filtration on $S(X)$ defined by the powers of $X$, which is preserved by the coproduct defined above. Taking the restricted Spanier-Whitehead dual, we have the spectrum $D'S(X)=\oplus_kD(X^{\wedge k}/\Sigma_k)$, which is $K(n)$-locally equivalent to $\oplus_k((DX)^{\wedge k})^{\Sigma_k}$ because $X$ is $K(n)$-locally dualizibile. The coproduct above induces a commutative $S$-algebra structure on $D'S(X)$, since the formula for the product is a finite sum on each summand. Moreover, the product preserves the increasing filtration defined by the powers of $DX$.

If we take the graded pieces of the $S$-algebra $D'S(X)$, we get a graded commutative $S$-algebra, and the following lemma is straightforward:
\begin{lem}
Suppose we have the graded commutative $S$-algebra structure over $D'S(X)$ defined by the coproduct induced from $$X\xrightarrow{1\wedge id+id\wedge 1 }(\mathbb{S}\wedge X)\oplus (X\wedge \mathbb{S})\rightarrow S(X)\wedge S(X)$$ by extending it into an $S$-algebra map. Then
the restriction of the iterated product map $(D'S(X))^{\wedge k}/\Sigma_k\rightarrow D'S(X)$ to the summand $DX^{\wedge k}/\Sigma_k$ is the norm map to the summand $(DX^{\wedge k})^{\Sigma_k}$: $$DX^{\wedge k}/\Sigma_k\rightarrow (DX^{\wedge k})^{\Sigma_k}\rightarrow D'S(X)$$
\end{lem}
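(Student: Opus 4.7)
The plan is to reduce the claim to a statement about the dual coproduct on $S(X)$ and then read off the answer from the primitive nature of the coproduct on $X$.

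First, by Spanier–Whitehead duality in the $K(n)$-local category (using that $X$ is $K(n)$-locally dualizable, so that each $X^{\wedge k}/\Sigma_k$ is dualizable with dual $(DX^{\wedge k})^{\Sigma_k}$), the iterated product map $(D'S(X))^{\wedge k}/\Sigma_k \to D'S(X)$ is Spanier–Whitehead dual to the iterated coproduct $\Delta^{(k)}\colon S(X) \to S(X)^{\wedge k}$. I will therefore compute this iterated coproduct restricted to the summand $X^{\wedge k}/\Sigma_k \subset S(X)$ and projected onto the summand $X^{\wedge k} \subset S(X)^{\wedge k}$ (the piece of multi-weight $(1,1,\dots,1)$).

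Second, I would use the universal property characterizing $\Delta$. Since $S(X)$ is the free commutative $S$-algebra on $X$, an $S$-algebra map out of $S(X)$ is determined by its restriction to $X$; by construction $\Delta$ is the algebra extension of the primitive map $\iota_L + \iota_R\colon X \to (X\wedge \mathbb{S}) \oplus (\mathbb{S}\wedge X) \hookrightarrow S(X)\wedge S(X)$. Iterating and tracking multi-weights, the composite
\[
X^{\wedge k} \xrightarrow{(\iota_L+\iota_R+\cdots)^{\wedge k}} (S(X)^{\wedge k})^{\wedge k} \xrightarrow{\text{multiply}} S(X)^{\wedge k}
\]
expands, on the multi-weight $(1,\dots,1)$ piece, into the sum over all bijections $\sigma\in \Sigma_k$ of the corresponding placements of the $k$ inputs. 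After descent to coinvariants on the source (using $\Sigma_k$-equivariance of the whole construction), this is exactly the norm map
\[
X^{\wedge k}/\Sigma_k \longrightarrow (X^{\wedge k})^{\Sigma_k}, \qquad [x_1\wedge\cdots\wedge x_k]\longmapsto \sum_{\sigma\in \Sigma_k} x_{\sigma(1)}\wedge\cdots\wedge x_{\sigma(k)}.
\]

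Finally, dualizing and using the identification $D(X^{\wedge k}/\Sigma_k) \simeq (DX^{\wedge k})^{\Sigma_k}$ from \cite{hs99}, the $k$-fold product on $D'S(X)$ restricted to the summand $(DX)^{\wedge k}/\Sigma_k$ is precisely the norm map $DX^{\wedge k}/\Sigma_k \to (DX^{\wedge k})^{\Sigma_k}$ composed with the inclusion of the $k$-th summand into $D'S(X)$, which is the claim. The one place to be careful is the combinatorics of the multi-weight decomposition of $\Delta^{(k)}$ together with the symmetric-group bookkeeping that ensures the descent from $X^{\wedge k}$ to $X^{\wedge k}/\Sigma_k$ on the source is compatible with the target living in $(X^{\wedge k})^{\Sigma_k}$; this is the crux, but it is purely formal given that the coproduct was defined so that $X$ is primitive.
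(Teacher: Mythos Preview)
The paper gives no proof of this lemma, describing it simply as ``straightforward.'' Your argument is correct and is presumably what the author had in mind: unwinding the duality, the restricted product on $D'S(X)$ is dual to the weight-$(1,\dots,1)$ component of the iterated coproduct $\Delta^{(k)}$ on $S(X)$, and since $X$ is primitive this component on $X^{\wedge k}$ is the sum $\sum_{\sigma\in\Sigma_k}\sigma$, which is precisely quotient followed by norm followed by inclusion of invariants.

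One small imprecision worth fixing: your sentence ``the iterated product map $(D'S(X))^{\wedge k}/\Sigma_k \to D'S(X)$ is Spanier--Whitehead dual to the iterated coproduct $\Delta^{(k)}\colon S(X)\to S(X)^{\wedge k}$'' does not type-check as written, since the dual of $(D'S(X))^{\wedge k}/\Sigma_k$ is $(S(X)^{\wedge k})^{\Sigma_k}$ rather than $S(X)^{\wedge k}$. This causes no real trouble, because $\Delta^{(k)}$ lands in the invariants anyway (cocommutativity), but it would be cleaner to first compute the composite $X^{\wedge k}\to S(X)\xrightarrow{\Delta^{(k)}} S(X)^{\wedge k}\to X^{\wedge k}$ before any quotient is taken, identify it with $\sum_{\sigma}\sigma$, and then observe that this factors through the norm. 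Also, your displayed map ``$(\iota_L+\iota_R+\cdots)^{\wedge k}$'' is informal for $k>2$; what you mean is that $\Delta^{(k)}|_X$ sends $x$ to $\sum_{i=1}^k 1\wedge\cdots\wedge x\wedge\cdots\wedge 1$, and then the algebra-map property extends this to $X^{\wedge k}/\Sigma_k$.
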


\begin{cor}\label{dxf}
In the $K(n)$-local category, the inclusion of $DX$ in $D'S(X)$ induces an equivalence of the free commutative $S$-algebra generated by $DX$, into $D'S(X)$.
\end{cor}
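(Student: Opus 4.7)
The goal is to show that the $S$-algebra map $\phi : S(DX) \to D'S(X)$ extending the inclusion $DX \hookrightarrow D'S(X)$ of the weight-one summand is a $K(n)$-local equivalence. My plan is to filter both sides in a compatible way, identify the associated graded of $\phi$ with the norm map using the preceding lemma, and then invoke the Greenlees--Kuhn Tate-vanishing theorem to finish.

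First I would set up the filtration. On $S(X)$ take the decreasing filtration by powers of $X$; the coproduct preserves it because the primitive part $1 \wedge id + id \wedge 1$ is strictly grading-preserving while the $\psi_X$ part, being a map $X \to X \wedge X$, strictly raises the total power of $X$. Dualizing gives an exhaustive increasing filtration $F_m D'S(X) = \bigoplus_{k \leq m} D(X^{\wedge k}/\Sigma_k)$ whose successive quotients are the individual summands, and whose product preserves $F_\bullet$ with the $\psi_X$-contribution vanishing on the associated graded. On $S(DX)$ the analogous filtration is the literal grading by powers of $DX$. The map $\phi$ is filtration-preserving since it sends the generator into $F_1$.

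Next I would identify the associated graded. On passing to associated graded, the product on $D'S(X)$ reduces precisely to the graded $\mathcal{E}_\infty$ structure studied in the preceding lemma, induced by the primitive-only coproduct. That lemma then identifies the restriction of the $k$-fold graded product to $(DX)^{\wedge k}/\Sigma_k$ with the norm map $(DX)^{\wedge k}/\Sigma_k \to ((DX)^{\wedge k})^{\Sigma_k}$ composed with the $K(n)$-local equivalence $((DX)^{\wedge k})^{\Sigma_k} \simeq D(X^{\wedge k}/\Sigma_k)$ coming from dualizability of $X$. By \cite{gk95} the norm map is a $K(n)$-local equivalence for every $k$, so the associated graded of $\phi$ is a $K(n)$-local equivalence in every degree. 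An induction on $m$, using the cofiber sequences $F_{m-1} \to F_m \to gr_m$ on both sides and the five-lemma, upgrades this to a $K(n)$-local equivalence $F_m \phi : F_m S(DX) \to F_m D'S(X)$ at every finite stage; taking the colimit, which is harmless because both filtrations are exhaustive with filtration quotients being direct summands, yields the corollary.

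The principal step requiring care is the identification of the associated graded of $\phi$ with the norm map: one must verify that the $k$-fold product of $DX$ inside $D'S(X)$ does land in $F_k$ and that its image modulo $F_{k-1}$ is exactly the norm map. This reduces to the fact that $\psi_X$ strictly raises the $X$-filtration, so that all contributions of the full coproduct not coming from the primitive part are of strictly lower filtration degree after dualizing. Once this is granted, everything else is formal: the Greenlees--Kuhn input handles the key nontriviality, and the five-lemma bookkeeping is routine.
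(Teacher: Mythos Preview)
Your proof is correct and follows essentially the same approach as the paper: reduce to associated graded pieces via the natural filtrations, identify the induced map there with the norm map using the preceding lemma, and conclude by the Greenlees--Kuhn $K(n)$-local Tate vanishing. You are more explicit about the five-lemma induction and the colimit step, and about why the $\psi_X$-part drops out on associated graded, but the underlying argument is identical to the paper's.
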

\begin{proof}
It is enough to check the graded pieces, since the induced map $S(DX)\rightarrow D'S(X)$ preserves the natural filtrations. 

If we look at the corresponding graded commutative $S$-algebra for $D'S(X)$, we find that the product is as the one in the lemma. Then the lemma shows that the induced map $S(DX)\rightarrow D'S(X)$ is the norm map on each homogeneous component. Then the corollary follows.
\end{proof}

Using this corollary, we can identify the topological Andr\'{e}-Quillen cohomolgoy of $D'S(X)$ with $DX$.

\begin{lem}
Suppose further that $X$ is the suspension spectrum of a space $Y$ with coproduct induced form the diagonal map.
Then the effect of the Bousfield-Kuhn functor on the James-Hopf map for $Y$ is the dual of the composite $$(DX^{\wedge p})^{\Sigma_p}\rightarrow D'S(X)\xleftarrow{\phi} S(DX)\rightarrow DX$$ 
where the map $\phi$ is the $K(n)$-equivalence in corollary \ref{dxf}, and the last map is the projection.
\end{lem}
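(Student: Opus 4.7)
The plan is to trace the James-Hopf map through the dualization and TAQ identifications developed in this section. By Snaith splitting, $\Sigma^\infty\Omega^\infty\Sigma^\infty Y\simeq S(X)$ with $X=\Sigma^\infty Y$, and by construction $jh$ is the adjoint of the projection $\pi: S(X)\to (X^{\wedge p})_{h\Sigma_p}$ onto the weight-$p$ summand. First I would pass to $K(n)$-local Spanier-Whitehead duals: applying $D$ and invoking the Greenlees-Kuhn norm equivalence $(DX^{\wedge p})_{h\Sigma_p}\simeq (DX^{\wedge p})^{h\Sigma_p}\simeq (DX^{\wedge p})^{\Sigma_p}$ of \cite{gk95} (using freeness of the $\Sigma_p$-action on the cofibrant $X^{\wedge p}$), the projection $\pi$ becomes the inclusion $\iota_p:(DX^{\wedge p})^{\Sigma_p}\hookrightarrow D'S(X)$ of the $p$-th graded summand.

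Next I would invoke the TAQ description of the Bousfield-Kuhn functor from \cite{br12}, which identifies $\Phi_{K(n)}\Omega^\infty\Sigma^\infty Y$ with the topological Andr\'{e}-Quillen cohomology of the $\mathcal{E}_\infty$ algebra $D'S(X)$, equipped with the commutative product induced from the diagonal on $\Omega^\infty\Sigma^\infty Y$ as developed above. Under the $K(n)$-local equivalence $\phi:S(DX)\xrightarrow{\sim}D'S(X)$ from Corollary \ref{dxf}, this TAQ is in turn identified with the TAQ of the free $\mathcal{E}_\infty$ algebra $S(DX)$, which computes to its generator $DX$ via the standard projection $\epsilon:S(DX)\to DX$ onto indecomposables. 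A parallel analysis for the target $(X^{\wedge p})_{h\Sigma_p}$ of the James-Hopf map produces $(DX^{\wedge p})^{\Sigma_p}$ as the relevant $K(n)$-local dual.

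Naturality of TAQ applied to the map $\iota_p$ then identifies the dual of $\Phi_{K(n)}jh$ with the induced map on TAQ, which under the chain of identifications above becomes precisely the composite
\[(DX^{\wedge p})^{\Sigma_p}\xrightarrow{\iota_p} D'S(X)\xleftarrow{\phi} S(DX)\xrightarrow{\epsilon} DX\]
asserted in the lemma.

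The main obstacle I anticipate is conceptual rather than computational: one must keep track of two genuinely distinct gradings — the Snaith grading on $S(X)$, descended to $D'S(X)$, and the free-algebra grading on $S(DX)$. These are not preserved by $\phi$, and the mismatch is precisely what makes the composite nontrivial and eventually forces the power-sum formula of Theorem \ref{sfl}. Once naturality of the TAQ identification with respect to this non-graded equivalence is verified, the rest of the verification is formal bookkeeping.
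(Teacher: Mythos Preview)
Your approach is essentially the same as the paper's: both invoke the Behrens--Rezk TAQ description of $\Phi_{K(n)}$ together with the equivalence $\phi$ of Corollary~\ref{dxf}, and extract the composite by naturality.

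Two small points the paper handles more explicitly. First, the result of \cite{br12} literally identifies $D\Phi_n Z$ with $TAQ(D\Sigma^\infty Z_+)$ using the \emph{full} dual $D$, not the restricted dual $D'$; the paper therefore constructs a further natural transformation $TAQ(D'\Sigma^\infty Z_+)\to TAQ(D\Sigma^\infty Z_+)$, which is only well-defined and natural on the category of weak infinite loop spaces (this is exactly why that category was introduced), and then checks the composite is an equivalence. You implicitly use this when you write ``the TAQ description \ldots\ identifies $\Phi_{K(n)}\Omega^\infty\Sigma^\infty Y$ with TAQ of $D'S(X)$.'' Second, your phrase ``naturality of TAQ applied to the map $\iota_p$'' is imprecise: $\iota_p$ is not an $\mathcal{E}_\infty$-algebra map, so TAQ is not functorial in it. What is actually being used is naturality of the Behrens--Rezk comparison with respect to the space-level map $jh$; after dualizing, the induced algebra map $D'(\Sigma^\infty jh)$ restricted to the degree-one Snaith summand of its source is $\iota_p$, and composing with the projection $\phi^{-1}$ followed by $\epsilon$ on the target recovers your displayed composite. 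With these clarifications your argument goes through and matches the paper's.
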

\begin{proof}
Behrens and Rezk constructed in Section 6 of \cite{br12} a natural transformation $$TAQ(D\Sigma^\infty Z_+)\rightarrow D\Phi_n Z$$
Now when $Z$ takes the form $\Omega^\infty\Sigma^\infty Y$, we have a natural transformation $TAQ(D'\Sigma^\infty Z_+)\rightarrow TAQ(D\Sigma^\infty Z_+)$ when we regard $Z$ as an object in the category of weak infinite loop spaces labeled by $Y$. The composition gives a natural transformation $TAQ(D'\Sigma^\infty Z_+)\rightarrow D\Phi_n Z$ from the category of weak infinite loop spaces to the category of spectra. Now it is easy to see that, the following natural transformation $$TAQ(D'\Sigma^\infty\Omega^\infty\Sigma^\infty Y_+)\rightarrow D\Phi_n\Omega^\infty\Sigma^\infty Y$$ is an equivalence since both sides can be identified with $D\Sigma^\infty Y$. So we conclude that the natural transformation $TAQ(D'\Sigma^\infty Z_+)\rightarrow D\Phi_n Z$ is an equivalence. The lemma follows from the previous corollary using this TAQ description of the Bousfield-Kuhn functor on weak infinite loop spaces.
\end{proof}

Now we will investigate the maps $$j_i: (DX^{\wedge i})^{\Sigma_i}\rightarrow D'S(X)\xleftarrow{\phi} S(DX)\rightarrow DX$$ for $i\leq p$. Of course $j_1$ is the identity map, and what we want to do is to find how the direct summand in $S(DX)$ transport to $D'S(X)$.
\begin{lem}
For $i<p$, the map $j_i$ factors through the multiplication map $DX^{\wedge i}\rightarrow DX$. In fact, for all $i$, the composition of $j_i$ with the transfer map $DX^{\wedge i}\rightarrow (DX^{\wedge i})^{\Sigma_i}$ is a multiple of the multiplication map.
\end{lem}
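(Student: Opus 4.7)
The plan is to compute $j_i$ explicitly by exploiting the filtration-preserving nature of $\phi$. First I would work out a formula for $\phi\colon S(DX)\to D'S(X)$ on each summand. For $y_1,\ldots,y_i\in DX$, the product $y_1\cdots y_i$ in $D'S(X)$ pairs against elements $z_1\cdots z_m\in X^{\wedge m}/\Sigma_m\subset S(X)$ via the iterated coproduct on $S(X)$, which comes from $\psi_X+1\wedge\mathrm{id}+\mathrm{id}\wedge 1$. Tracking Snaith degrees, the only contributions to the $(1,\ldots,1)$ part of $S(X)^{\otimes i}$ come from distributing the pieces of each $z_k$ via $\psi_X^{(|T_k|)}$ onto slots $T_k$, where $\{T_1,\ldots,T_m\}$ is an unordered set partition of $\{1,\ldots,i\}$. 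This yields
\[
\phi([y_1\otimes\cdots\otimes y_i]) \;=\; \sum_{m=1}^{i}\;\sum_{\{T_1,\ldots,T_m\}} N_m(y_{T_1}\otimes\cdots\otimes y_{T_m}),
\]
where $y_T\in DX$ denotes the product of the $y_t$ for $t\in T$ and $N_m$ is the $\Sigma_m$-norm on $DX^{\wedge m}$. The $m=i$ summand recovers the preceding lemma (the norm), while the $m=1$ summand is the iterated product $\mu_i(y_1,\ldots,y_i)\in DX$.

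Next I would invert $\phi$ inductively against the filtration $F^k=\bigoplus_{j\leq k}(DX^{\wedge j})^{\Sigma_j}$, which $\phi$ preserves and on whose associated graded it agrees with the $K(n)$-locally invertible norm map by \cite{gk95}. Given $\alpha\in(DX^{\wedge i})^{\Sigma_i}$ with $i\geq 2$, set $\tilde\alpha=N_i^{-1}(\alpha)\in(DX)^{\wedge i}/\Sigma_i$, so that $\phi(\tilde\alpha)=\alpha+\sum_{m<i}\phi_m(\tilde\alpha)$ where $\phi_m$ denotes the $m$-th component of $\phi$ from the formula above. Since $j_i$ is by definition projection of $\phi^{-1}(\alpha)$ to $DX\subset S(DX)$, and $\tilde\alpha$ itself sits in the $k=i$ summand, one obtains the recursion
\[
j_i(\alpha) \;=\; -\sum_{m=1}^{i-1} j_m\bigl(\phi_m(\tilde\alpha)\bigr).
\]
Assuming inductively that $j_m\circ N_m=c_m\,\bar\mu_m$ for some scalar $c_m$ (with $c_1=1$ and $\bar\mu_m\colon(DX)^{\wedge m}/\Sigma_m\to DX$ the induced multiplication), and noting that each $\bar\mu_m(y_{T_1}\otimes\cdots\otimes y_{T_m})$ equals $\mu_i(\tilde\alpha)$ (the partition exhausts $[i]$ and $DX$ is commutative), each $\phi_m$ contributes $c_m\cdot S(i,m)\cdot \mu_i(\tilde\alpha)$, where $S(i,m)$ is the Stirling number of the second kind. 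Hence $j_i\circ N_i=c_i\bar\mu_i$ with $c_i=-\sum_{m=1}^{i-1}c_m S(i,m)$.

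Finally, composing with the transfer $\mathrm{tr}\colon DX^{\wedge i}\to (DX^{\wedge i})^{\Sigma_i}$ (equivalently, the quotient followed by $N_i$) gives $j_i\circ\mathrm{tr}=c_i\mu_i$, establishing the second claim. For $i<p$, solving the recursion yields $c_i=(-1)^{i-1}(i-1)!$, which is a unit in $\mathbb{Z}_p$; thus $j_i=c_i\bar\mu_i\circ N_i^{-1}$ factors through $\bar\mu_i$, and hence through $\mu_i\colon DX^{\wedge i}\to DX$, proving the first claim. The main obstacle will be the combinatorial unpacking of $\phi_m$: one must carefully track Snaith gradings through the full coproduct on $S(X)$ and verify that the partition sum genuinely lies in $(DX^{\wedge m})^{\Sigma_m}$ (which follows because summing over orderings of the partition packages the expression as the norm $N_m$). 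Once this formula is secured, the remaining computation is a purely formal induction.
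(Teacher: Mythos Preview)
Your approach is essentially the same as the paper's: both arguments decompose $\phi$ restricted to $DX^{\wedge i}/\Sigma_i$ as a sum over partitions of $i$ (you use set partitions, the paper uses integer partitions together with transfers along $\Sigma_I\subset\Sigma_i$, which is equivalent), and both then run an induction on $i$ using that the top piece of $\phi$ is the norm. The recursion you write, $j_i\circ N_i=-\sum_{m<i}j_m\circ\phi_m$, is exactly the paper's equation $\sum_I j_{|I|}\circ\phi_I=0$ rearranged.

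Two small remarks. First, you go further than the paper by solving the recursion explicitly to get $c_i=(-1)^{i-1}(i-1)!$; the paper is content to say the coefficient is ``the alternating sum of ways to do the multiplication'' and does not need the closed form. Second, your justification of the first claim is slightly off: the fact that $c_i$ is a $p$-adic unit is irrelevant to whether $j_i=c_i\,\bar\mu_i\circ N_i^{-1}$ factors through $\mu_i$. What you actually need (and what the paper uses) is that for $i<p$ the quotient $DX^{\wedge i}\to DX^{\wedge i}/\Sigma_i$ splits $p$-locally, so $\bar\mu_i=\mu_i\circ s$ for a section $s$, giving $j_i=\mu_i\circ(c_i\,s\circ N_i^{-1})$. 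With that correction your argument is complete.
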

\begin{proof}
We will show this by induction. The case for $i=1$ is clear.
So we will assume that we have the lemma for all $j_t$ for $t<i$.

Now to investigate $j_i$, we will first find the image of $DX^{\wedge i}/\Sigma_i$ under $\phi$, which goes to zero under projection.
By \cite{kuhn00}, the restriction of $\phi$ to $DX^{\wedge i}/\Sigma_i$ is the sum of the maps 
$$\phi_I:DX^{\wedge i}/\Sigma_i\rightarrow (DX^{\wedge |I|})^{\Sigma_{|I|}}$$ 
where $I$ ranges over all partitions of $i$, into $|I|$ parts. (This corresponds to the special case of a covering by $i$ copies of $1$'s in the notation of \cite{kuhn00}.) The maps $\phi_I$ are described as follows.
Let $\Sigma_I\subset \Sigma_i$ be the subgroup of permutations which preserve the (unordered) partition $I$. For example, $ \Sigma_{(p)}=\Sigma_{(1,\dots,1)}=\Sigma_i$. So if $I$ has $i_m$ components with $m$ elements, then we have $DX^{\wedge i}/\Sigma_I=\wedge_m(DX^{\wedge m}/\Sigma_{m})^{\wedge i_m}/\Sigma_{i_m}$. Then $\phi_I$ is defined to be the composition 
\begin{eqnarray*}
DX^{\wedge i}/\Sigma_i\rightarrow DX^{\wedge i}/\Sigma_I=\wedge_m(DX^{\wedge m}/\Sigma_{m})^{\wedge i_m}/\Sigma_{i_m}\rightarrow
\\ \wedge_mDX^{\wedge i_m}/\Sigma_{i_m}\rightarrow DX^{\wedge |I|}/\Sigma_{|I|}\rightarrow (DX^{\wedge |I|})^{\Sigma_{|I|}}
\end{eqnarray*} 
Here the first map is the transfer, the second induced from the multiplication defined by $\psi$, the third map is the restriction, and the last map is the norm map.

Now we have the equation $\sum_I j_{|I|}\circ \phi_I=0$. We compose this equation with the restriction map $DX^{\wedge i}\rightarrow DX^{\wedge i}/\Sigma_i$. Then the summand with $|I|=(1,\dots,1)$ becomes the composition of $j_i$ with the transfer map, since the composition of the norm map with the restriction is the transfer map. For the other summands, we find that the composition of the restriction map with the map 
\begin{eqnarray*}
DX^{\wedge i}/\Sigma_i\rightarrow DX^{\wedge i}/\Sigma_I=\wedge_m(DX^{\wedge m}/\Sigma_{m})^{\wedge i_m}/\Sigma_{i_m}\rightarrow \\ \wedge_mDX^{\wedge i_m}/\Sigma_{i_m}\rightarrow DX^{\wedge |I|}/\Sigma_{|I|}
\end{eqnarray*} 
is the sum of maps $$DX^{\wedge i}\xrightarrow{g} \wedge_mDX^{\wedge i_m}\rightarrow DX^{\wedge |I|}\rightarrow DX^{\wedge |I|}/\Sigma_{|I|}$$
where the first map rearranges the order of factors according to $g$ for $g$ running through $\Sigma_i/\Sigma_{|I|}$. The second map is the multiplication map, and the last map is the restriction map. So the composition of the map $j_{|I|}\circ \phi_I$ with the transfer map $DX^{\wedge i}\rightarrow (DX^{\wedge i})^{\Sigma_i}$ becomes the sum of compositions $$DX^{\wedge i}\rightarrow DX^{\wedge |I|}\rightarrow (DX^{\wedge |I|})^{\Sigma_I}\xrightarrow{j_{|I|}} DX$$
where the first map is the multiplication map of type $I$ described above, and the second map is the transfer. The sum is over the set $\Sigma_i/\Sigma_I$ of ways to do the multiplication. 

So by induction, we proved that the composition of $j_i$ with the restriction $DX^{\wedge i}\rightarrow (DX^{\wedge i})^{\Sigma_i}$ is a multiple of the multiplication map. (In fact, we know the coefficient is the alternating sum of ways to do the multiplication, but we do not need this formula.) Now in case $i<p$, the restriction map is projection to a direct summand, so we can find a section, and the lemma is proved.

\end{proof}

The case $i=p$ is a bit subtle, since then the restriction map is no longer a projection. This is why it is the interesting James-Hopf map. We have the following description:
\begin{thm} \label{jhc}
The map $j_p$ is the sum of two maps. The first map is the negative of the map $(DX^{\wedge p})^{\Sigma_p}\leftarrow DX^{\wedge p}/\Sigma_p\rightarrow DX$ where the first map is the norm map and the second the multiplication map defined by the map $\psi$. The other map factors through the multiplication map $DX^{\wedge p}\rightarrow DX$.
\end{thm}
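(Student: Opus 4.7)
The plan is to push the strategy of the preceding lemma one step further. I will evaluate the identity $\sum_I j_{|I|}\circ\phi_I=0$, valid on the summand $DX^{\wedge p}/\Sigma_p$ of $S(DX)$, separate two distinguished partitions of $p$, and control the remainder via the previous lemma.

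For $I=(1,1,\dots,1)$ (so $|I|=p$), the transfer and restriction appearing in the definition of $\phi_I$ collapse to identities and the middle multiplication step is vacuous, so $\phi_I$ reduces to the norm map $DX^{\wedge p}/\Sigma_p\to (DX^{\wedge p})^{\Sigma_p}$; this term contributes $j_p\circ\mathrm{norm}$. For $I=(p)$ (so $|I|=1$), the transfer is again trivial, the middle step is the $p$-fold multiplication $\mu:DX^{\wedge p}/\Sigma_p\to DX$ induced by $\psi$, and both the final restriction and norm are trivial; combined with $j_1=\mathrm{id}$, this term contributes $\mu$ itself. Every other partition satisfies $1<|I|<p$, and for these the previous lemma tells us that $j_{|I|}$ factors through the multiplication $DX^{\wedge|I|}\to DX$.

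Rearranging the identity, I obtain
\[
j_p\circ\mathrm{norm}=-\mu-\sum_{1<|I|<p}j_{|I|}\circ\phi_I.
\]
Now I invoke \cite{gk95}: the norm map is a $K(n)$-local equivalence, so it admits an inverse in the $K(n)$-local category. Post-composing with $\mathrm{norm}^{-1}$ yields
\[
j_p=-\mu\circ\mathrm{norm}^{-1}-\sum_{1<|I|<p}j_{|I|}\circ\phi_I\circ\mathrm{norm}^{-1}.
\]
The first summand is exactly the first map in the theorem statement (with its negative sign), written in the zig-zag form $(DX^{\wedge p})^{\Sigma_p}\leftarrow DX^{\wedge p}/\Sigma_p\to DX$. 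For the residual sum, each term factors through the multiplication $DX^{\wedge|I|}\to DX$ with $|I|<p$, and using the unit of the $S$-algebra $DX$ (pad the $|I|$ factors with $p-|I|$ copies of the unit) we factor each such $\mu_{|I|}$ further through $DX^{\wedge p}\to DX$; hence the entire residual sum factors through $\mu$, providing the second map asserted by the theorem.

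The main obstacle, which distinguishes this case from $i<p$, is that the restriction $DX^{\wedge p}\to(DX^{\wedge p})^{\Sigma_p}$ no longer splits off a direct summand, so there is no algebraic section allowing us to recover $j_p$ directly from $j_p\circ\mathrm{norm}$. The essential input that circumvents this obstruction is the Greenlees--Sadofsky equivalence of norm in the $K(n)$-local category, which supplies the missing inverse; after that, the remaining work is bookkeeping over partitions.
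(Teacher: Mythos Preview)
Your overall strategy matches the paper's: isolate the partitions $I=(1,\dots,1)$ and $I=(p)$ in the identity $\sum_I j_{|I|}\circ\phi_I=0$, and invert the norm $K(n)$-locally. The identification of those two extreme summands is correct. The problem is your treatment of the intermediate partitions.

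You assert that ``using the unit of the $S$-algebra $DX$'' one can factor $\mu_{|I|}:DX^{\wedge|I|}\to DX$ through $\mu_p:DX^{\wedge p}\to DX$ by padding with $p-|I|$ copies of the unit. But $DX$ is \emph{not} unital in this setup: $X$ is only assumed to carry a coassociative cocommutative coproduct $\psi_X$, with no counit, and in the case of interest $X=\Sigma^\infty Y$ for a pointed connected space $Y$, so there is no map $\mathbb{S}\to DX$ playing the role of a unit (indeed for $Y=S^{2n+1}$ the multiplication on $DX$ is null, yet $DX$ itself is not contractible). So your padding step simply does not exist, and the argument stops there.

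The paper closes this gap differently. For a partition $I\neq(1,\dots,1),(p)$ of $p$, all parts are smaller than $p$ and there are fewer than $p$ parts, so the stabilizer $\Sigma_I\subset\Sigma_p$ is a proper subgroup of order prime to $p$. Hence the restriction $DX^{\wedge p}\to DX^{\wedge p}/\Sigma_I$ admits a section, and since $\phi_I$ begins with the transfer landing in $DX^{\wedge p}/\Sigma_I$, one can route $\phi_I$ through $DX^{\wedge p}$. From $DX^{\wedge p}$ one first multiplies according to $I$ to reach $DX^{\wedge|I|}$, then applies the transfer into $(DX^{\wedge|I|})^{\Sigma_{|I|}}$, and then $j_{|I|}$; the \emph{second} claim of the previous lemma (that $j_{|I|}\circ\text{transfer}$ is a multiple of the multiplication $DX^{\wedge|I|}\to DX$) turns this into the full $p$-fold multiplication $DX^{\wedge p}\to DX$. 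That is how the factorization through $\mu_p$ is obtained without ever invoking a unit on $DX$. Note that it is the second assertion of the previous lemma, not the first, that is used here.
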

\begin{proof}
We follow the arguments of the previous lemma. We still have the equation $\sum_I j_{|I|}\circ \phi_I=0$. Now in this case, the summand corresponding to $I=(1,\dots,1)$ is the composition of $j_i$ with the norm map, and the summand corresponding to $I=(p)$ is the multiplication map. So these give the map $(DX^{\wedge p})^{\Sigma_p}\leftarrow DX^{\wedge p}/\Sigma_p\rightarrow DX$.

For the other summands, we note that $\Sigma_I$ is a proper subgroup for $I\neq (1,\dots,1), (p)$. Then the restriction map $DX^{\wedge p}\rightarrow DX^{\wedge p}/\Sigma_I$ admits a section. As before, we find that the composition $$DX^{\wedge p}\rightarrow DX^{\wedge p}/\Sigma_I \rightarrow DX^{\wedge |I|}/\Sigma_{|I|}$$ is the composition $$DX^{\wedge p}\rightarrow DX^{\wedge |I|}\rightarrow DX^{\wedge |I|}/\Sigma_{|I|}$$ 
The only difference is that we no longer need the permutation of factors. The same argument shows that all such summands $j_{|I|}\circ \phi_I$ with $|I|\neq (1,\dots,1), (p)$ factors through the multiplication map $DX^{\wedge p}\rightarrow DX$.
This completes the proof
\end{proof}

\begin{rem}
We do not know how to describe the map factoring the second map through the multiplication map. In ordinary cohomology, it look like the restriction map. However, in the case we are interested in, i.e. for $X=S^{2n+1}$, the multiplication map is homotopically zero nonequivariantly, so in this case the second map has no contributions.
\end{rem}

\section{Determination of the cohomological James-Hopf map} \label{dc}
We will show that theorem \ref{jhc} determines the effect of the James-Hopf map on $E_n$-cohomology for odd spheres. 

We need to understand the map $$(DX^{\wedge p})^{h\Sigma_p}\leftarrow (DX^{\wedge p})_{h\Sigma_p}\rightarrow DX$$

First we look at the case when $X$ is the sphere spectrum.
In this case, the multiplication map $\mathbb{S}_{h\Sigma_p}\rightarrow \mathbb{S}$ is simply the restriction map along the homomorphism $\Sigma_p\rightarrow *$ to the trivial group.

To understand the norm map, we will use the following description. For any $Y$ with an action by a finite group $G$, the identity map is $G$-equivariant, so we have a canonical element $id_G\in \pi_0\mathbb{H}om(Y,Y)^{hG}$, where $\mathbb{H}om(Y,Y)$ is the inner $Hom$, and $G$ acts on it by conjugation. The following lemma is a consequence of discussions in the appendix of \cite{br12}:
\begin{lem}
The class of the norm map $Y_{hG}\rightarrow Y^{hG}$ in $\pi_0$ of the spectrum $\mathbb{H}om(Y_{hG},Y^{hG})=\mathbb{H}om(Y,Y)^{h(G\times G)}$, is the transfer of $id_G$ along the diagonal homomorphism $G\rightarrow G\times G$.
\end{lem}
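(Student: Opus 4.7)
The plan is to verify the identification by unpacking the adjunctions underlying the displayed equivalence $\mathbb{H}om(Y_{hG},Y^{hG})\simeq\mathbb{H}om(Y,Y)^{h(G\times G)}$ and then matching the norm map against the transfer of $id_G$ through a universal characterization of the norm.

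First I would set up the Hom identifications. The standard adjunction gives $\mathbb{H}om(Y_{hG},Z)\simeq\mathbb{H}om(Y,Z)^{hG}$ for any spectrum $Z$, where the $G$-action on $\mathbb{H}om(Y,Z)$ is induced from the action on $Y$. Taking $Z=Y^{hG}$ and combining with $\mathbb{H}om(Y,Y^{hG})\simeq\mathbb{H}om(Y,Y)^{hG}$, now with $G$ acting through the target $Y$, produces the $h(G\times G)$ expression quoted in the statement. Under this identification, the identity $Y\to Y$ is $G$-equivariant and determines a canonical class $id_G\in\pi_0\mathbb{H}om(Y,Y)^{hG}$ with respect to the diagonal (conjugation) action.

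Next I would establish the equality with the norm. I would appeal to the universal characterization of the norm: it is the essentially unique natural transformation $(-)_{hG}\to(-)^{hG}$ between exact functors on $G$-spectra which is the identity on free $G$-cells $Z\wedge G_+$, where both homotopy orbits and homotopy fixed points collapse to $Z$. Naturality of the transfer of $id_G$ is immediate from functoriality of the transfer, so the task reduces to checking the value on $Y=Z\wedge G_+$. Here the class $id_Y$ itself arises as the transfer of $id_Z$ along the trivial subgroup inclusion $\{1\}\hookrightarrow G$, so by transitivity of transfers the transfer along $\Delta:G\to G\times G$ of this $id_Y$ equals the transfer along $\{1\}\hookrightarrow G\times G$ applied to $id_Z$, which by the double coset formula (or direct inspection, using that both orbits and fixed points of $Z\wedge G_+$ are canonically $Z$) recovers the identity map $Z\wedge G_+\to Z\wedge G_+$, as required.

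The main obstacle I expect is keeping the adjunction and transfer conventions coherent. Different models for equivariant spectra describe the norm through formally different constructions (Lewis--May geometric fixed points, the Greenlees--May Tate square, or the Nikolaus--Scholze formalism), and identifying these at the level of an explicit $\pi_0$-class requires matching signs and orientation conventions carefully; this is presumably why the cited appendix of \cite{br12} is needed. Once a fixed convention is chosen, the universal characterization argument converts the statement into a verification on free orbits where all maps become concrete.
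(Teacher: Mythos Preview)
The paper does not actually prove this lemma: it simply states that the result ``is a consequence of discussions in the appendix of \cite{br12}'' and moves on. So there is no argument in the paper to compare your proposal against at the level of details.

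Your approach---identifying the norm via its universal characterization as the essentially unique natural transformation $(-)_{hG}\to(-)^{hG}$ that restricts to the identity on induced $G$-spectra, and then verifying that the transfer of $id_G$ along the diagonal satisfies this same property---is a standard and correct strategy. The reduction to $Y=Z\wedge G_+$ is the right move, and on such $Y$ both sides unwind to the identity on $Z$ under the canonical equivalences $(Z\wedge G_+)_{hG}\simeq Z\simeq (Z\wedge G_+)^{hG}$. The transitivity-of-transfer step you invoke is exactly what collapses the computation. One small point to make precise: the ``uniqueness'' in the universal characterization is really uniqueness up to the action of $\pi_0\mathbb{H}om(\mathbb{S},\mathbb{S})^{hG}$ on the set of such natural transformations, so you should either normalize both candidates on a single free cell (which you do) or phrase the comparison as an equality of classes rather than an abstract uniqueness appeal. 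With that caveat your outline is sound, and your closing remark that the cited appendix is there to fix conventions is accurate.
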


Now specialize to the case of $\mathbb{S}$. The map $id_G$ is simply the restriction map $\mathbb{S}\rightarrow \mathbb{S}^{hG}$ along the map $G\rightarrow *$.  Now if $G$ is abelien, $G$ is the pull back of the diagram $G\times G\rightarrow G\leftarrow *$, where the first map sends $(g_1,g_2)$ to $g_2^{-1}g_1$. In this case, the projection formula says the composition of restriction and transfer equals the composition of transfer and restriction: $\mathbb{S}\xrightarrow{tr} \mathbb{S}^{hG}\xrightarrow{res} \mathbb{S}^{h(G\times G)}$.

If $H$ is a subgroup of $G$, we consider the composition $$\mathbb{S}\xrightarrow{res}\mathbb{S}^{hG}\xrightarrow{tr}\mathbb{S}^{h(G\times G)}\xrightarrow{res}\mathbb{S}^{h(H\times H)}$$ of the restriction, transfer and restriction map. Using the double coset formula, the composition of the latter two maps is the sum of maps $\mathbb{S}^{hG}\rightarrow\mathbb{S}^{h(H\cap H^g)}\rightarrow\mathbb{S}^{h(H\times H)}$, where the sum is over $g\in G/H$, and the first map is restriction, the second map is the transfer along the map $H\cap H^g\rightarrow H\times H$ whose first component is the inclusion and the second component is conjugation by $g$.

Now we take $G=\mathbb{Z}/p$. In this case, $G$ is abelien. So we will look at the composition of $\mathbb{S}\xrightarrow{tr} \mathbb{S}^{hG}\xrightarrow{res}\mathbb{S}^{h(G\times G)}$ of transfer and restriction. 
After taking $E_n$-homlogy, we can identify ${E_n}_*(\mathbb{S}^{hG})=E_n^*[[\xi]]/[p](\xi)$ with the ring of functions on the finite flat algebraic group of order $p$ points in the formal group $F_n$ associated with $E_n$. 
The transfer map $E_n^*(\mathbb{S})\rightarrow E_n^*(\mathbb{S}_{hG})$ has the following formula:
\begin{lem}
The transfer of $1\in E_n^*(\mathbb{S})$ is the element $$\frac{[p](\xi)}{\xi}\in E_n^*(\mathbb{S}_{hG})=E_n^*[[\xi]]/[p](\xi)$$
\end{lem}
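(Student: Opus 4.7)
The transfer map in question, $\mathbb{S}\xrightarrow{tr}\mathbb{S}^{hG}$ for $G=\mathbb{Z}/p$ acting trivially, corresponds via $K(n)$-local Spanier--Whitehead duality to a class in $E_n^*(BG_+)=E_n^*[[\xi]]/[p](\xi)$, and I want to identify this class as $[p](\xi)/\xi$. The plan is to pin down $tr(1)$ through two properties---vanishing after multiplication by $\xi$, and having augmentation value $p$---and then to observe that these two conditions uniquely determine $[p](\xi)/\xi$ in the ring. The presentation $E_n^*(BG_+)=E_n^*[[\xi]]/[p](\xi)$ itself arises from the fibration $B\mathbb{Z}/p\to BS^1\xrightarrow{\cdot p}BS^1$ coming from the short exact sequence $1\to\mathbb{Z}/p\to S^1\xrightarrow{\cdot p}S^1\to 1$, under which the $p$-th power map sends the complex orientation $\xi$ to $[p]_F(\xi)$ by the definition of the formal group law of $E_n$.

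The first property, $\xi\cdot tr(1)=0$, I would derive from the projection formula for the transfer: since $\pi^*(\xi)=0$ in $E_n^*(\mathrm{pt})=E_n^*$ (the class $\xi$ restricts trivially to the basepoint), one has $tr(1)\cdot\xi = tr(1\cdot\pi^*(\xi)) = tr(0) = 0$. The second property, $tr(1)|_{\xi=0}=p$, comes from the standard Becker--Gottlieb/Euler-characteristic identity: the composition $\pi\circ tr$ of the transfer followed by the augmentation-to-basepoint equals multiplication by $\chi(\mathbb{Z}/p)=|G|=p$.

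These two properties pin down $tr(1)=[p](\xi)/\xi$ uniquely. The first forces $tr(1)\in\mathrm{ann}(\xi)\subset E_n^*[[\xi]]/[p](\xi)$, and this annihilator is the cyclic $E_n^*$-module generated by $[p](\xi)/\xi$: one has $\xi\cdot [p](\xi)/\xi = [p](\xi) = 0$, and a rank count shows $\mathrm{ann}(\xi)$ has $E_n^*$-rank one, since $E_n^*[[\xi]]/[p](\xi)$ is free of rank $p^n$ and multiplication by $\xi$ has cokernel $E_n^*[[\xi]]/(\xi,[p](\xi))=E_n^*$ of rank one. So $tr(1)=c\cdot [p](\xi)/\xi$ for some $c\in E_n^*$, and the second property forces $c=1$ by matching constant terms---$[p](\xi)/\xi$ has constant term $p$ coming from $[p](\xi)=p\xi+O(\xi^2)$.

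The main technical obstacle is justifying the projection formula rigorously in the $K(n)$-local framework and correctly identifying the direction of the transfer (which is subtle given the various available conventions for ``transfer''). An alternative and perhaps cleaner route uses the Hopkins--Kuhn--Ravenel character-theoretic interpretation of $E_n^*(BG_+)$ as functions on the $p$-torsion subgroup $F_n[p]$ of the formal group of $E_n$: in that picture the transfer class is the delta function supported at the identity element, which is automatically represented by $[p](\xi)/\xi$ since this function equals $p$ at the origin and vanishes at every non-trivial $p$-torsion point of $F_n[p]$.
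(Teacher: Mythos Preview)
Your argument is correct. The paper does not actually prove this lemma; it simply refers the reader to page~588 of Hopkins--Kuhn--Ravenel. So rather than comparing two proofs, the situation is that you have supplied a self-contained argument where the paper defers to the literature. Your ``alternative route'' via the character-theoretic interpretation---identifying the transfer class as $p$ times the characteristic function of the identity on $F_n[p]$---is in fact exactly the content of the passage being cited, so that suggestion and the paper's approach coincide.

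One small tightening: the rank count shows $\mathrm{ann}(\xi)$ has $E_n^*$-rank one, but this alone does not yet say that $[p](\xi)/\xi$ \emph{generates} it rather than being a proper multiple of a generator. The quickest way to close this is to note that in the basis $1,\xi,\dots,\xi^{p^n-1}$ the top coefficient of $[p](\xi)/\xi$ is a unit, so the cyclic submodule it generates is a direct summand; equivalently, if $\xi f(\xi)\equiv 0$ with $f$ of degree $<p^n$, then $\xi f(\xi)$ must be a scalar multiple of the Weierstrass polynomial of $[p](\xi)$ by degree reasons, forcing $f\in E_n^*\cdot[p](\xi)/\xi$.
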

\begin{proof}
See page 588 in \cite{hkr}.
\end{proof}

We know that the addition on $\mathbb{Z}/p$ correspond to the multiplication of the formal group law. So the restriction map along $$G\times G\rightarrow G$$ gives the map on cohomology sending the element $$f(\xi)\in E_n^*(\mathbb{S}_{hG})=E_n^*[[\xi]]/[p](\xi)$$ to the element $$f(\xi-_{F_n}\eta)\in E_n^*(\mathbb{S}_{h(G\times G)})=E_n^*[[\xi,\eta]]/([p](\xi),[p](\eta))$$
Hence the class of the norm map has Hurewicz image $$\frac{[p](\xi-_{F_n}\eta)}{\xi-_{F_n}\eta}\in E_n^*(\mathbb{S}_{h(G\times G)})$$

Now we take some embedding $E_n^*\rightarrow K$ into some algebraically closed field of characteristic zero. Then over $K$, the subgroup of order $p$ points becomes the constant group $G\times Spec(K)$. Then the algebra of functions becomes the algebra of functions on the discrete group $G$ with values in $K$. Then the element $\frac{[p](x)}{x}$ is $p$ times the characteristic function on the unit, since its multiplication with $x$ is zero and have value $p$ when $x=0$.
Hence over $K$ the Hurewicz image of the norm map is $p$ times the characteristic function on the diagonal of $G\times G$.

Now we look at the case when $G=\Sigma_p$, which we are interested in. In this case, we take the subgroup $H=\mathbb{Z}/p\in G$.
Then the Weyl group of $H$ in $G$ is $\mathbb{F}_p^{\times}$. Recall from \ref{ebg} that $E_n^*(\mathbb{S}_{hG})$ can be identified with $\mathbb{F}_p^{\times}$-invariant elements in $E_n^*(\mathbb{S}_{hH})$ under the restriction map. 

We have shown how to compute the map $$\mathbb{S}\rightarrow\mathbb{S}^{hG}\rightarrow\mathbb{S}^{h(G\times G)}\rightarrow \mathbb{S}^{h(H\times H)}$$ using the double coset formula. There are two case of $H\cap H^g$ according to whether $g$ is contained in the normalizer of $H$ or not.
The first case gives the $p-1$ components $S\rightarrow S^{h(H\times H)}\rightarrow S^{h(H\times H)}$ indexed by elements $g$ in the Weyl group of $H$, where the first map is the class of the norm map of $H$, and the second map has $id$ on the first component, and conjugation by $g$ on the second component. The other components are $\frac{(p-1)!-(p-1)}{p}$ times the transfer map $S\rightarrow S^{h(H\times H)}$. Hence over $K$, the Hurewicz image of the norm map is the function on $H\times H$ which has value $p$ at any pair $(P,Q)\in H\times H$ if they are both not the unit and $Q$ is a multiple of $P$, and value zero otherwise unless $(P,Q)=(0,0)$ in which case we have the value $p!$.

We can take a basis of the $E_n$ cohomology of $\mathbb{S}_{hG}$ over $K$ to be the characteristic functions of the classes of $H$ under the action of $\mathbb{F}_p^{\times}$. Call them $f_0,\dots, f_q$, where $f_0$ is the characteristic function on $0\in H$.
Then the Hurewicz image of the norm map is the element $$p!f_0\otimes f_0+pf_1\otimes f_1\otimes f_1+\dots+pf_q\otimes f_q$$ 
Hence the nondegenerate quadratic form on $E_n^*(\mathbb{S}_{hG})$ defined by the norm map has the formula $$<f,g>=\frac{1}{p!}f(0)g(0)+\frac{1}{p(p-1)}\sum_{P\in H\setminus\{0\}}f(P)g(P)$$ over $K$.
Note that this formula also makes sense over $E_n^*$.

Since the restriction map $\mathbb{S}_{hG}\rightarrow \mathbb{S}$ sends $1$ to $1$ in cohomology, we arrive at the conclusion that the map $\mathbb{S}^{hG} \leftarrow \mathbb{S}_{hG}\rightarrow \mathbb{S}$ has the effect on homology sending any element $f\in E_n^*(S_{hG})$ to the element $$<f,1>=\frac{1}{p!}f(0)+\frac{1}{p(p-1)}\sum_{P\in H\setminus\{0\}}f(P)$$

Now we look at the case of $S^k$. In that case, we have a commutative diagram
\begin{equation}
\begin{array}{ccccc}
S^k\wedge \mathbb{S}_{h\Sigma_p}&\rightarrow&S^k\wedge \mathbb{S}^{h\Sigma_p}&\leftarrow& \Sigma^\infty S^k\\
\downarrow&&\downarrow&&\downarrow\\
(\Sigma^\infty S^{kp})_{h\Sigma_p}&\rightarrow&(\Sigma^\infty S^{kp})^{h\Sigma_p}&\leftarrow&\Sigma^\infty S^k
\end{array}
\end{equation}
Here the vertical maps are induced from the diagonal map on $S^k$.
Note that the rows are the dual of the maps we have just studied, since the dual of the norm map is still the norm map of the dual.

The first vertical map is the quotient $P_0\rightarrow P_k$. We observe that on $E_n$-cohomology, this map is the inclusion of the ideal generated by $(\xi^{p-1})^m$ if $k=2m-1$ is odd. So we get the same formula on cohmology of the second row under this identification.

As we have observed, for odd spheres, the diagonal map is non-equivariantly homotopic to zero, so the other nasty terms disappear. Hence we prove Theorem \ref{sfl}.

\section{The monochromatic homological Hopf invariant} \label{mh}

In this section, we will compute the monochromatic homological Hopf invariant, i.e. the behavior of the James-Hopf map on the cohomology of the second monochromatic layer: $$jh^*: {E_2}_*\Phi_{M_2}QS^0\rightarrow {E_2}_*\Phi_{M_2}QB\Sigma_p$$

We will start with a general $E_n$. 
Recall that ${E_n}_*$ is the complete local ring (in the graded sense) $\mathbb{Z}_p[[u_1,\dots,u_{n-1}]][u^{\pm}]$ with maximal ideal $I=(p,\dots,u_{n-1})$.
Let $$M={E_n}_*/(p^\infty,\dots,v_{n-1}^\infty)$$
Following \cite{lch}, we know that, as a graded ring, ${E_n}_*$ is Gorenstein and $M$ is the dualizing module for the functor $Ext^n(-,{E_n}_*)$ in the category of graded ${E_n}_*$ modules.

Recall that ${E_n}_*M_n\mathbb{S}=M[-n]$, where $M[-n]$ means the module $M$ with a shift in grading.
In general, we have $M_nX=\varinjlim W_k\wedge X$, where $W_k$ is a system of type $n$ complexes of the form $S/(p^{i_0},\dots,v_{n-1}^{i_{n-1}})$ with the numbers $i_0,\dots,i_{n-1}$ tending to infinity. When $E_n^*X$ is finitely generated, ${E_n}_*W_k\wedge X$ are all finite length ${E_n}_*$ modules. Since ${E_n}_*$ is a Gorenstein grade ring, the functor $Ext^s(-,{E_n}_*)$, restricted to the category of finite length ${E_n}_*$ modules, is zero for $s\neq n$ and equals $Hom(-,M)$ when $s=n$. Hence by the universal coefficient theorem, $E_n^*W_k\wedge X=Hom({E_n}_*W_k\wedge X,M)[n]$. Taking the limit, we conclude $$E_n^*X=Hom({E_n}_*M_nX,M)[n]$$, since the $\varprojlim^1$ term vanishes when we assume $E_n^*X$ is finitely generated.
Dually, we have $${E_n}_*M_nX=Hom(E_n^*X,M)[-n]$$

In the case of $P_1=\Sigma^\infty B\Sigma_p$, we have the following description. Recall that $E_n^*P_1$ is the $\mathbb{F}_p^{\times}$-invariants of the ideal generated by $\xi$ in the ring $E_n^*[[\xi]]/[p](\xi)$. Since we are working with a $p$-typical formal group law, the polynomial $[p](\xi)$ equals $\xi q(\xi^{p-1})$ for some polynomial $$q(y)=p+q_1y+q_2y^2+\dots$$
So we have:
\begin{lem} 
$E_n^*P_1$ is generated by the elements $y,y^2,\dots$, subject to the relation $py^i+q_1y^{i+1}+\dots=0$, for $i\geq1$. 
\end{lem}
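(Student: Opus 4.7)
The plan is to identify $E_n^*P_1$ with the augmentation ideal of $E_n^*B\Sigma_p$ and then to translate the single ring-level identity $xq(x)=0$ into the claimed module-level presentation with countably many generators and relations.

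First I would use the stable splitting $\Sigma^\infty B\Sigma_{p,+}\simeq \mathbb{S}\vee P_1$ together with the ring description $E_n^*\Sigma^\infty B\Sigma_{p,+}=E_n^*[x]/(xq(x))$ from Section \ref{ebg} to conclude that $E_n^*P_1$ is the ideal $(x)\subset E_n^*[x]/(xq(x))$, as already recorded in the paper. As an $E_n^*$-module this ideal is spanned by the powers $y=x,\, y^2=x^2,\, y^3=x^3,\dots$, since any element of $(x)$ is an $E_n^*$-linear combination of such powers. Multiplying the defining relation $xq(x)=0$ by $x^{i-1}$ yields $x^iq(x)=0$ for every $i\geq 1$, and expanding $q(y)=p+q_1y+q_2y^2+\cdots$ rewrites this as the stated family of relations $py^i+q_1y^{i+1}+q_2y^{i+2}+\cdots=0$.

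To finish I would check that these relations are complete. Since the leading coefficient of $q$ is the unit $v_n$, the polynomial $xq(x)$ is, up to a unit, monic of some degree $N=2+p+p^2+\cdots+p^{n-1}$, so $E_n^*[x]/(xq(x))$ is a free $E_n^*$-module with basis $1,x,\dots,x^{N-1}$, and hence $(x)$ is free on $x,\dots,x^{N-1}$. The relations $x^iq(x)=0$ for $i\geq 1$ can be solved for $x^{i+N-1}$ in terms of lower powers (using that the top coefficient is a unit), so they cut the free $E_n^*$-module on all the generators $y^i$ down precisely to the free $E_n^*$-module on $y,\dots,y^{N-1}$, with no surplus relations.

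There is no real obstacle here; the lemma is essentially a bookkeeping translation between the ring presentation of $E_n^*B\Sigma_p$ and the $E_n^*$-module presentation of its augmentation ideal. The only point requiring care is the use of the unit leading coefficient of $q$ in the completeness step.
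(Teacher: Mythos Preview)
Your argument is correct and matches the paper's (implicit) approach: the paper gives no separate proof, treating the lemma as an immediate restatement of the identification $E_n^*P_1 = xE_n^*[x]/q(x)$ from Section~\ref{ebg}. Your completeness step via the unit top coefficient (more precisely, via Weierstrass preparation, since $q$ is a priori a power series rather than a polynomial) is a detail the paper simply leaves to the reader.
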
 
The filtration defined by the maps $$P_1\rightarrow P_3\rightarrow P_5\rightarrow\dots$$ corresponds to the decreasing filtration defined by the powers of $y$ in cohomology. Dually, we have the corresponding increasing dual filtration on ${E_n}_*M_n P_1=Hom(E_n^*P_1,M)[n]$.

Then the monochromatic homological Hopf invariant problem is to find, for any element $\mu$ of ${E_2}_*M_2\mathbb{S}$, the lowest filtration where $jh_*(\mu)$ lies, and its image to the corresponding sub-quotient.

The duality between homology and cohomology gives a perfect pairing $E_n^*X\otimes{E_n}_*M_nX[n]\rightarrow M$. 
Using this pairing, we can state the Hopf invariant problem dually:
for any element $$\frac{1}{p^{i_0}v_1^{i_1}\dots v_{n-1}^{i_{n-1}}}\in M$$ find the largest number $k$ for which $$<\frac{1}{p^{i_0}v_1^{i_1}\dots v_{n-1}^{i_{n-1}}}, jh^*(y^k)>\neq 0$$ and find the value of the pairing.

\begin{rem}
Here we only study the problem of pairing with monomials. This suffices to detect the Hopf invariant of the $\beta$ family. In fact, we can give a natural partial ordering on the monomials defined by divisibility relations. In the monochromatic expression for the $\beta$ family elements, there is always a unique leading term under this partial ordering, and one can easily see that the non-vanishing of pairings with any element in cohomolgy is determined only by the leading term.
\end{rem}

So we need to understand the elements $jh^*(y^k)$. From Theorem \ref{sfl}, this amounts to compute the sum of the powers of the roots of the power series $q(y)$. We will use Weierstrass preparation theorem to transform it into a polynomial and use Newton's identities.

From now on we will work with chromatic level two. So we will set $n=2$. In the remainder of this section, we will also assume $p$ to be an odd prime. The case $p=2$ will not be discussed in this section.

To begin with, recall that $$[p](x)=px+_{F_2}v_1x^p+_{F_2}v_2x^{p^2}$$ with the Araki generators.
Let $h(x)=(px+v_1x^p)+_{F_2}v_2x^{p^2}$. Then it is easy to see that $h(x)\equiv [p](x) \mod (pv_1)$.

Because we are working with a p-typical formal group law, and since $p$ is odd, we have $[-1](x)=-x$.
Hence the Weierstrass factor of $h(x)$ is $px+v_1x^p+v_2x^{p^2}$, since they have the same roots. From the uniqueness of Weierstrass preparation theorem, we conclude that this is the Weierstrass factor for $[p](x)$ modulo $(pv_1)$.

Hence we find that the Weisstrass factor $$q_0(y)=y^{p+1}+c_1y^p+\dots+c_{p+1}$$ for $q(y)$ has the property that, $c_{p+1}$ is $p$ times a unit, and $c_p$ is $v_1$ times a unit, and the other $c_i$'s are divisible by $pv_1$.

Our goal is to find the power sums of the roots of $q_0(y)$.

Let $e_i$ be the elementary symmetric polynomials of the roots of $q_0(y)$. Then $e_i=\pm c_i$. 

The following lemma is a well known consequence of Newton's identity:
\begin{lem}
We have the following formula:
$$s_k=\sum\frac{k(\sum_i r_i-1)!}{\prod_i r_i!}\prod_i(-e_i)^{r_i}$$
where $s_k$ denotes the sum of $k^{th}$ powers, and the sum in the right hand side is over numbers $r_i$ with $\sum_i ir_i=k$.
\end{lem}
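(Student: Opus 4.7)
This is a classical instance of the Newton--Girard identities, and I would prove it by comparing two expansions of the logarithmic generating function of the roots. Let $\alpha_1,\dots,\alpha_{p+1}$ be the roots of $q_0(y)$ in some extension of ${E_2}_\ast$ over which $q_0$ splits, and set $P(t)=\prod_j(1-\alpha_j t)$. With the sign convention $e_i=c_i$ (the branch of $e_i=\pm c_i$ for which $P(t)=1+\sum_{i\geq 1}e_i t^i$), we have the two formulas
$$\log P(t)=\sum_j\log(1-\alpha_j t)=-\sum_{k\geq 1}\frac{s_k}{k}\,t^k$$
and, by the Mercator series applied to $u=\sum_{i\geq 1}e_i t^i$,
$$\log P(t)=\sum_{m\geq 1}\frac{(-1)^{m-1}}{m}\Bigl(\sum_{i\geq 1}e_i t^i\Bigr)^m.$$

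Expanding the $m$th power by the multinomial theorem as
$$\Bigl(\sum_{i\geq 1}e_i t^i\Bigr)^m=\sum_{\sum_i r_i=m}\frac{m!}{\prod_i r_i!}\prod_i e_i^{r_i}\,t^{\sum_i i r_i},$$
extracting the coefficient of $t^k$ (so that $\sum_i ir_i=k$ and $m=\sum_i r_i$), and using $m!/m=(m-1)!$ yields
$$-\frac{s_k}{k}=\sum_{\sum_i i r_i=k}\frac{(-1)^{m-1}(m-1)!}{\prod_i r_i!}\prod_i e_i^{r_i}.$$
Multiplying through by $-k$ and absorbing the overall sign $(-1)^m$ into the product via $(-1)^m\prod_i e_i^{r_i}=\prod_i(-e_i)^{r_i}$ gives exactly the stated formula.

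The argument is entirely formal: the two expansions of $\log P(t)$ are identities of power series over the splitting ring, and since both sides of the resulting equation are symmetric functions of the roots, the identity descends from the splitting extension back to ${E_2}_\ast$. There is no real obstacle; the only bookkeeping concerns matching the sign convention in $e_i=\pm c_i$, which is pinned down by requiring consistency with the form $\prod_i(-e_i)^{r_i}$ that appears in the statement. A quick sanity check for small $k$ (for example $s_1=-e_1$, $s_2=e_1^2-2e_2$, $s_3=-e_1^3+3e_1e_2-3e_3$) confirms the signs come out correctly.
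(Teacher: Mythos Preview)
The paper does not actually prove this lemma; it simply asserts it as ``a well known consequence of Newton's identity'' and moves on. Your generating-function argument via the two expansions of $\log P(t)$ is one of the standard textbook derivations of this formula, and it is correct. You were also right to flag the sign convention: with the \emph{standard} definition of elementary symmetric polynomials the formula as printed is off by a global factor of $(-1)^k$, and your choice of convention $P(t)=1+\sum e_it^i$ is exactly what is needed to make the statement literally true (and is harmless for the paper's purposes, since everything downstream is computed only up to units).
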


To address the question of pairing with $\frac{1}{p^{s+1}v_1^t}$, we will first study the pairing on the individual terms.

\begin{lem}
For fixed $s,t$,
among all the terms $$\frac{1}{p}\frac{k(\sum_i r_i-1)!}{\prod_i r_i!}\prod_i c_i^{r_i}$$ 
with $k=\sum ir_i$, (recall that the factor $\frac{1}{p}$ comes from the formula of the James-Hopf map), which have nontrivial pairing with $\frac{1}{p^{s+1}v_1^t}$, the one with $k=\sum_iir_i$ largest, has the following property: whenever $i<p$, we have $r_i=0$.
\end{lem}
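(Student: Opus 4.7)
My plan is to argue by contradiction via an exchange of low-index generators. Suppose the maximum-$k$ contributing monomial has $r_j>0$ for some $j<p$; I will produce another configuration that involves only $c_p$ and $c_{p+1}$, has strictly larger $k$, and still pairs nontrivially, contradicting maximality.

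The first step is to translate the nontrivial-pairing condition into a pair of valuation bounds. Since $M=E_2^*/(p^\infty,v_1^\infty)$, the product $\frac{1}{p}\frac{k(N-1)!}{\prod r_i!}\prod c_i^{r_i}\cdot\frac{1}{p^{s+1}v_1^t}$ is nonzero in $M$ precisely when the $p$-adic valuation of the numerator is at most $s$ and its $v_1$-adic valuation is at most $t-1$ (where $N:=\sum r_i$). Combined with the structural facts just established for $q_0$---namely $v_p(c_{p+1})=v_{v_1}(c_p)=1$ and both $v_p(c_i)\geq 1$ and $v_{v_1}(c_i)\geq 1$ for $i<p$---this yields
\[ v_p(\mathrm{coef}) + r_{p+1} + \gamma \leq s+1, \qquad r_p + \gamma \leq t-1, \]
where $\gamma:=\sum_{i<p}r_i$ and $\mathrm{coef}:=\frac{k(N-1)!}{\prod r_i!}$.

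The second step sets up the exchange: given a contributing configuration with $\gamma>0$, define $r_p':=r_p+\gamma$, $r_{p+1}':=r_{p+1}+\gamma$, and $r_i':=0$ for $i<p$. Using $\sum_{i<p} i\, r_i \leq (p-1)\gamma$, a direct count gives $k'\geq k+(p+2)\gamma>k$. Moreover, the lower bounds on the $p$-adic and $v_1$-adic valuations of $\prod c_i^{r_i'}$ coincide with those for $\prod c_i^{r_i}$, so the $v_{v_1}$-constraint is automatically satisfied by the new configuration.

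The technical heart is to compare the new combinatorial coefficient $\mathrm{coef}'$ to $\mathrm{coef}$ $p$-adically, so that the first valuation bound is preserved. Via Legendre's formula this amounts to a Kummer-style carry count contrasting $\binom{N}{r_1,\ldots,r_{p+1}}$ and $\binom{N+\gamma}{r_p+\gamma,\,r_{p+1}+\gamma}$: absorbing the $\gamma$ low-index slots into the two distinguished slots $r_p,r_{p+1}$ cannot introduce more base-$p$ carries than were already present in the original product, and the factor $k'/k$ contributes only an $O(\log_p k')$ correction. Consequently the change in $v_p(\mathrm{coef})$ is bounded linearly in $\gamma$ with constant strictly less than $(p+2)/(p+1)$, which is more than compensated by the $(p+2)\gamma$ gain in $k$. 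This forces the new monomial into the contributing region with strictly larger $k$, contradicting maximality and proving the lemma. I expect this final coefficient-comparison step to be the main technical obstacle, since it requires careful control of the multinomial valuation rather than a clean monotonicity statement.
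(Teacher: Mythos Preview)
Your overall strategy---argue by contradiction via an exchange that increases $k$ while preserving the pairing condition---is the same as the paper's. But the specific exchange you choose makes the argument substantially harder, and the step you flag as ``the main technical obstacle'' is in fact a genuine gap that you do not close.

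The paper moves the exponents differently: it takes a \emph{single} index $i_0<p$ with $r_{i_0}>0$ and transfers all of $r_{i_0}$ into \emph{one} of the slots $l\in\{p,p+1\}$, leaving everything else fixed. The crucial feature of this move is that $N=\sum r_i$ is unchanged. Hence the numerator $(N-1)!$ is the same before and after, and the only change in the combinatorial coefficient is in the denominator, where $r_{i_0}!\,r_l!$ is replaced by $(r_l+r_{i_0})!$. Since $(r_l+r_{i_0})!$ is an integer multiple of $r_{i_0}!\,r_l!$, the new coefficient (without the factor $k$) automatically has $p$-valuation \emph{at most} that of the old one---no Kummer or Legendre estimate is needed. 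For the $c$-part, replacing $c_{i_0}^{r_{i_0}}$ (which is divisible by $(pv_1)^{r_{i_0}}$) by $c_l^{r_{i_0}}$ can only lower both the $p$- and $v_1$-valuations, for either choice of $l$. So the entire non-$k$ part of the new term has both valuations bounded by those of the old term. The remaining factor $k$ is then handled by noting that the two choices $l=p$ and $l=p+1$ give $k_1-k=(p-i_0)r_{i_0}$ and $k_2-k_1=r_{i_0}$, and an elementary $p$-adic lemma shows at least one of $k_1,k_2$ has $p$-valuation no larger than that of $k$.

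By contrast, your exchange increases both $r_p$ and $r_{p+1}$ by $\gamma$, so $N$ jumps to $N+\gamma$. This forces you to compare $v_p\big((N+\gamma-1)!\big)$ against $v_p\big((N-1)!\big)$, and there is no reason the carry count for $\binom{N+\gamma}{r_p+\gamma,\,r_{p+1}+\gamma}$ should be dominated by that for the original multinomial; your assertion that ``absorbing the $\gamma$ low-index slots \ldots\ cannot introduce more base-$p$ carries'' is unsupported and can fail (e.g.\ if $r_p\equiv p-1\pmod p$). Finally, your claim that the increase in $k$ ``compensates'' for a possible increase in $v_p(\mathrm{coef})$ is a category error: the pairing constraint is a fixed upper bound on the $p$- and $v_1$-valuations and does not involve $k$ at all; the only role of $k$ is that it is the quantity you are trying to enlarge. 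So the argument as written does not establish that the new term still pairs nontrivially.
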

\begin{proof}
Suppose the contrary, we will construct a new term as follows.
If some $r_{i_0}\neq0$ with $i_0<p$. Set $l=p\;or\;p+1$. Then we construct the new term with $r'_l=r_l+r_{i_0}$, $r'_{i_0}=0$ and the other $r'_i=r_i$ for $i\neq i_0,l$. Then $\frac{1}{p}\frac{(\sum_i r_i-1)!}{\prod_i r_i!}\prod_i c_i^{r_i}$ is divisible by $\frac{1}{p}\frac{(\sum_i r'_i-1)!}{\prod_i r'_i!}\prod_i c_i^{r'_i}$, and $k'>k$.
So if $k'$ is divisible by no larger powers of $p$ than $k$ do, then we are done.

Note that we have two choices of $l$. We will show that at least one choice has the desired property. Let $k_1$ and $k_2$ be the number $\sum_iir'_i$ for the choice $l=p$ and $l=p+1$ respectively. Then we find $k_1-k=(p-i_0)r_{i_0}$, and $k_2-k_1=r_{i_0}$.
Thus the following easy lemma completes the proof.
\end{proof}
\begin{lem}
Let $a,b,c$ be integers. Suppose $\mathfrak{v}_p(b-a)\geq \mathfrak{v}_p(c-b)$, where $\mathfrak{v}_p$ is the $p$-adic valuation. Then we cannot have both $\mathfrak{v}_p(b)>\mathfrak{v}_p(a)$ and $\mathfrak{v}_p(c)>\mathfrak{v}_p(a)$.
\end{lem}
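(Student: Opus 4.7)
The plan is to argue by contradiction using the ultrametric (strong triangle) property of the $p$-adic valuation, namely that $\mathfrak{v}_p(x+y) \geq \min(\mathfrak{v}_p(x),\mathfrak{v}_p(y))$ with equality whenever $\mathfrak{v}_p(x) \neq \mathfrak{v}_p(y)$. So suppose for contradiction that both $\mathfrak{v}_p(b) > \mathfrak{v}_p(a)$ and $\mathfrak{v}_p(c) > \mathfrak{v}_p(a)$ hold.

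First I would pin down $\mathfrak{v}_p(b-a)$ exactly. Writing $b-a = b + (-a)$ and using that $\mathfrak{v}_p(-a) = \mathfrak{v}_p(a) < \mathfrak{v}_p(b)$, the equality case of the ultrametric inequality gives $\mathfrak{v}_p(b-a) = \mathfrak{v}_p(a)$. Next I would bound $\mathfrak{v}_p(c-b)$ from below by applying the ordinary ultrametric inequality: $\mathfrak{v}_p(c-b) \geq \min(\mathfrak{v}_p(c),\mathfrak{v}_p(b)) > \mathfrak{v}_p(a)$, using both of our contradiction hypotheses. Combining these, $\mathfrak{v}_p(c-b) > \mathfrak{v}_p(a) = \mathfrak{v}_p(b-a)$, which directly contradicts the standing hypothesis $\mathfrak{v}_p(b-a) \geq \mathfrak{v}_p(c-b)$.

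There is essentially no obstacle here: the whole argument is two applications of the basic non-archimedean inequality, one in its equality form (to compute $\mathfrak{v}_p(b-a)$) and one in its weak form (to estimate $\mathfrak{v}_p(c-b)$). The only thing worth being a little careful about is the convention for $\mathfrak{v}_p(0)$ if any of $a$, $b-a$, or $c-b$ vanish; but $\mathfrak{v}_p(0) = +\infty$ is compatible with all the inequalities above, so the argument goes through unchanged in the degenerate cases.
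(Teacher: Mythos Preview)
Your proof is correct; it is exactly the kind of two-line ultrametric argument one would expect here. The paper itself does not supply a proof at all, simply calling this an ``easy lemma'' and moving on, so there is nothing further to compare.
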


Recall we want to compute terms of the form
$$<\frac{1}{p^{i+1}v_1^j},\frac{1}{p}s_k>$$ where $s_k$ is the power sum of the roots of $f(y)$.
We will study the pairing with terms in the previous lemma, i.e. $$\frac{1}{p}\frac{k(s+t-1)!}{s!t!}p^sv_1^t$$ which contributes to $s_k$ with $k=(p+1)s+pt$.

For the study of $\beta$ family at odd primes, we will only consider the case when $j$ is divisible by $p^i$.

For the pairing to be nontrivial, we must have $t\leq j-1$. We also need to consider divisibility for $p$.
Since $\frac{k(s+t-1)!}{s!t!}$ is always an integer, a necessary condition for the pairing to be nontrivial is $s\leq i+1$.

When $i=0$, then $s\leq1$, and it is not hard to find that $(s,t)=(1,j-1)$ is the unique choice for the largest $m$, and in this case $k=pj+1$ is not divisible by $p$. Hence we conclude that $jh_*(\frac{1}{pv_1^j})$ lies in filtration $pj+1$, and its image in the corresponding subquotient is $\frac{1}{v_1}$ up to units.

Now we will assume $i\geq 1$. Then $j$ is divisible by $p$. We also note that $s\leq p^i$ in this case. 

First we need a lemma on binomial coefficients:
\begin{lem}
The largest power of $p$ dividing $\frac{s!}{r!(s-r)!}$ does not exceed $\frac{\log s}{\log p}$.
\end{lem}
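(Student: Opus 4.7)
The plan is to invoke Legendre's formula for the $p$-adic valuation of factorials, namely $\mathfrak{v}_p(n!) = \sum_{i\geq 1}\lfloor n/p^i\rfloor$, and apply it to the three factorials making up the binomial coefficient. This yields
$$\mathfrak{v}_p\!\left(\frac{s!}{r!(s-r)!}\right) = \sum_{i\geq 1}\left(\left\lfloor\frac{s}{p^i}\right\rfloor - \left\lfloor\frac{r}{p^i}\right\rfloor - \left\lfloor\frac{s-r}{p^i}\right\rfloor\right),$$
so the proof reduces to bounding this sum.

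Next I would observe that each summand is either $0$ or $1$. This is the elementary inequality $0 \leq \lfloor a+b\rfloor - \lfloor a\rfloor - \lfloor b\rfloor \leq 1$ applied to $a = r/p^i$ and $b = (s-r)/p^i$, whose sum is $s/p^i$. Consequently $\mathfrak{v}_p\binom{s}{r}$ is bounded above by the number of indices $i\geq 1$ for which the summand can be nonzero.

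Finally I would note that the summand automatically vanishes as soon as $p^i > s$, because then $\lfloor s/p^i\rfloor = 0$ and the other two floor functions are also zero. Hence only indices $i$ with $p^i \leq s$, i.e.\ $i \leq \log_p s$, can contribute, and there are at most $\lfloor \log_p s \rfloor$ of them. Combining, $\mathfrak{v}_p\binom{s}{r} \leq \lfloor \log_p s \rfloor \leq \log_p s$, as desired.

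There is no real obstacle: this is a classical fact and would follow equally well from Kummer's theorem, which identifies $\mathfrak{v}_p\binom{s}{r}$ with the number of carries when adding $r$ and $s-r$ in base $p$, a number manifestly bounded by the number of base-$p$ digits of $s$ minus one, i.e.\ by $\lfloor \log_p s\rfloor$. The Legendre-formula approach is the shortest to write out in detail.
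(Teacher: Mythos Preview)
Your argument is correct and is essentially identical to the paper's own proof: the paper also writes the valuation as the sum $\sum_{i\geq 1}\bigl(\lfloor s/p^i\rfloor - \lfloor r/p^i\rfloor - \lfloor (s-r)/p^i\rfloor\bigr)$ and notes that each term is at most $1$, with only the terms for $p^i\leq s$ contributing.
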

\begin{proof}
That largest power is $([\frac{s}{p}]-[\frac{r}{p}]-[\frac{s-r}{p}])+([\frac{s}{p^2}]-[\frac{r}{p^2}]-[\frac{s-r}{p^2}])+\dots$, and each term is at most $1$.
\end{proof}

Now we will study which terms of the form $\frac{1}{p}\frac{k(s+t-1)!}{s!t!}p^sv_1^t$ has nontrivial pairing with $\frac{1}{p^{i+1}v_1^j}$.

There are two cases to consider. First assume that $s+t>j$. In that case, we observe that $\frac{(j-1)!}{t!(j-1-t)!}$ is a $p$-adic unit, since $$j-1-t<s+t-1-t=s-1<p^i$$ and $j$ is divisible by $p^i$.
The remaining factors give $$\frac{1}{p}\frac{kj(j+1)\dots(s+t-1)}{(j-t)\dots(s-1)s}p^sv_1^t$$
Since $t<j$, $s\leq p^i$, we have $s+t-1-j<p^i$. Since $p^i$ divides $j$, the factor $(j+1)\dots(s+t-1)$ has the same $p$-valuation as $(s+t-j-1)!$. By the lemma, $\frac{(s-2)!}{(j-t-1)!(s+t-j-1)!}$ has $p$-valuation at most $\frac{\log(s-2)}{\log p}$, (we understand this number to be $0$ when $s=2$). We also note that at most one of $s$ and $s-1$ is divisible by $p$. Hence the $p$-valuation of $$\frac{1}{p}\frac{j(j+1)\dots(s+t-1)}{(j-t)\dots(s-1)s}p^s$$ is at least $-1+i-\frac{\log(s-2)}{\log p}-\frac{\log s}{\log p}+s$. So we need $$s\leq 1+\frac{\log s(s-2)}{\log p}$$ for this term have nontrivial pairing with $\frac{1}{p^{i+1}v_1^j}$. So the only choices are $p=2$ and $s=2,4$. But in those cases, it is not hard to find that $k$ has extra factors of $2$ and we actually get zero pairing.

Hence the case $s+t>j$ is completely excluded, and we must have $s+t\leq j$. Since $s\leq i+1$, the largest possible $k$ is obtained from the choice $s=i+1$, and $t=j-i-1$. Then $k=pj+i+1$. Then $\frac{k}{s}$ and $\frac{(s+t-1)!}{(s-1)!t!}$ are both $p$-adic units, and the term is the unique one with nontrivial pairing with $\frac{1}{p^{i+1}v_1^j}$ and largest $k$. One checks that the pairing is $\frac{1}{pv_1^{i+1}}$ up to a unit.

This proves Theorem \ref{hlhi}, when one traces the degree to get the actual powers of $v_2$.

\section{Comparison with the modular isogeny complex} \label{cmp}

Behrens and Rezk give another description of the $E_n$-homology of the unstable spheres using the modular isogeny complex. See \cite{br12} and \cite{rez13} for details. We will show that this description is compatible with our formula in the special case of chromatic height two.

First note that we are computing using cohomology and the modular isogeny complex is in terms of homology. So we need the self-duality of $L_{K(n)}\Sigma^\infty {B\Sigma_p}_+$ to do the comparison.

Let $R=E_n^*[x]/q(x)$. Recall $\xi q(\xi^{p-1})=[p](\xi)$. Then there is the trace map $tr:R\rightarrow E_n^*$. We define a pairing $<,>:R\times xR\rightarrow E_n^*$ by the formula $<\lambda,\mu>=\frac{tr(\lambda\mu)}{p}$. One can see that this is a perfect pairing since $<x^i,x^{\frac{p^n-1}{p-1}-i}>$ is a unit and $<x^i,x^j>$ lies in the maximal ideal when $i+j<\frac{p^n-1}{p-1}$. Using this pairing, we can identify ${E_n}_*^{\wedge}L_{K(n)}P_i$ with $x^{1-i}R$, where $x^{1-i}R$ denotes the sub-$R$-module of $R[x^{-1}]$ generated by $x^{1-i}$. Then the dual of the effect of the James-Hopf map on cohomology becomes simply the inclusion of the unit $E_n^*\rightarrow R\rightarrow x^{1-i}R$, which give the formula for the homological James-Hopf map. 

This is different from the modular isogeny complex, for we use the right unit instead of the left unit in that case. However, in the special case of $n=2$, we have an automorphism on $R$, which represents the operation of transforming the pair $(\mathbb{G},H)$ into $(\mathbb{G}/H, \mathbb{G}[p]/H)$, where $\mathbb{G}$ is a deformation of a height $2$ formal group and $H$ is an order $p$ subgroup of $\mathbb{G}$, and $\mathbb{G}[p]$ denotes of subgroup of elements of exponent $p$, which has order $p^2$. This automorphism swaps the left and right unit, so we find in this case our result agrees with the computations in \cite{rez13}. In fact, the pairing we use, after modified by the above automorphism, is the well-known Weil pairing of elliptic curves.

\section{The monochromatic stable Hopf invariant for $\beta$-families} \label{hb}

The previous section computes the behavior of the James-Hopf map on second monochromatic homology.
Then we have the corresponding map on ANSS. In this section we will show that for $p\geq5$, there are no obstructions to get a homotopical conclusion.

Recall that on the second monochromatic layer of the sphere, the $\beta_{k/j,i}$ is represented by an expression with leading term $\frac{v_2^k}{p^iv_1^j}$ on the zero line of the $E_2$-term of the ANSS.
Theorem \ref{hlhi} shows that the image of this term in ANSS lies in filtration $pj+i$, and represented by $\frac{v_2^{k-j}}{v_1^i}$ in the AHSS for the $E_2$ term of ANSS for $P_1=\Sigma^\infty B\Sigma_p$.

So there are two possibilities. One is that the James-Hopf map sends $\beta_{k/j,i}$ to the element represented by $\frac{v_2^{k-j}}{v_1^i}$ in filtration $pj+i$, and in this case we conclude that the $K(2)$-local stable Hopf invariant of $\beta_{k/j,i}$ is $\beta_{k-j/i}$. The other possibility is that the James-Hopf map sends $\beta_{k/j,i}$ to something with larger filtration. In that case, since on $E_2$-term of ANSS, the image of $\frac{v_2^{k-j}}{v_1^i}$ on the corresponding subquotient is zero because it lies in a lower filtration, we conclude that the image of $\beta_{k/j,i}$ under the James-Hopf map must be represented by terms with homological filtration at least one in ANSS. The following lemma shows that the latter does not happen if $p\geq5$.

\begin{lem}
Let $p\geq5$.
In dimension $2(p-1)k-1$, the $E_2$-term of the ANSS for $M_2P_1$ vanishes, and in dimension $2(p-1)k-2$, the terms in the $E_2$-term of ANSS all lie in homological degree $0$, with AHSS name $\frac{v_2^a}{v_1^b}$ for certain choices of $a,b$.
\end{lem}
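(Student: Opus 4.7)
The plan is to compute the ANSS $E_2$-term for $M_2 P_1$ via the Atiyah--Hirzebruch spectral sequence coming from the cellular filtration of $P_1$, whose associated graded is $\bigvee_{j \geq 1} \Sigma^{|v_1|j - 1} M(p)$. The two key inputs are the Miller--Ravenel--Wilson/Shimomura computation of the ANSS $E_2$-term for $M_2 M(p) = v_2^{-1} M(p)/(v_1^\infty)$ at $p \geq 5$, and the description of the attaching maps in $P_1$ provided by the isomorphism $E_n^* B\Sigma_p = E_n^*[x]/xq(x)$ recalled in Section \ref{ebg}.

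First, I would recall that at $p \geq 5$ the cohomology $\mathrm{Ext}^{*,*}_{BP_*BP}(BP_*, BP_* M_2 M(p))$ is relatively simple: the $0$-line consists of classes $v_2^a/v_1^b$ in stems $(a(p+1) - b)|v_1|$, which are divisible by $|v_1| = 2(p-1)$; the higher lines come from Morava stabilizer cohomology generators of type $h_{1i}$ and $b_{1i}$, contributing to stems with residues $\equiv -1$ and $\equiv -2$ modulo $|v_1|$ respectively. Shifting by $|v_1|j - 1$ from the $j$-th sub-quotient, the $0$-line classes land in stems $\equiv -1 \pmod{|v_1|}$ of $M_2 P_1$, while the higher-line classes land in stems $\equiv -2$ or $\equiv -3 \pmod{|v_1|}$.

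Next, I would compute the AHSS differentials. The attaching between adjacent cells within the $j$-th Moore sub-quotient is multiplication by $p$, while the attaching between the $j$-th and $(j+1)$-th sub-quotients is encoded by the coefficients of the Weierstrass polynomial $q(x) = p + q_1 x + \dots + q_{p+1} x^{p+1}$ with $q_1 \equiv v_1$ and $q_{p+1} \equiv v_2$ modulo appropriate ideals. Since $p$-multiplication on $M_2 \mathbb{S}$-homology classes $v_2^a / (p^i v_1^b)$ sends them to $v_2^a / (p^{i-1} v_1^b)$ (nonzero when $i \geq 2$), the $p$-attaching within each Moore piece provides a $d_1$ differential that kills all $0$-line contributions in stems $\equiv -1 \pmod{|v_1|}$, cleaning out dimension $2(p-1)k-1$ entirely. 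In dimension $2(p-1)k-2$, a similar analysis shows that the only surviving classes come from the $0$-line, with AHSS names of the form $v_2^a/v_1^b$, while the potential higher-line $b_{1i}$-type contributions are killed by differentials involving the $q_i$'s.

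The main obstacle will be the careful bookkeeping of the AHSS differentials in combination with the Weierstrass structure of $q(x)$ and the explicit form of the $\mathrm{Ext}$-classes. The hypothesis $p \geq 5$ is crucial: at $p = 3$ the Morava stabilizer cohomology is denser and additional classes (from more generators in low bidegree) would survive in the target stems, so the cancellations I sketch cease to give vanishing. For $p \geq 5$ the sparseness of generators, coupled with the fact that $q_1 \equiv v_1$ and $q_{p+1} \equiv v_2$ reduce the nontrivial inter-piece attaching maps to manageable pieces, allows the argument to conclude that the ANSS $E_2$-term of $M_2 P_1$ has the claimed vanishing in dimension $2(p-1)k-1$ and the claimed concentration on the $0$-line with $v_2^a/v_1^b$-names in dimension $2(p-1)k-2$.
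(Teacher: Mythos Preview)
Your approach has a genuine gap in the degree bookkeeping that leads you down an unnecessary path. You place the $0$-line classes $v_2^a/v_1^b$ of $M_2M(p)$ in stems $\equiv 0 \pmod{|v_1|}$, but this misses the suspension shift: from the fiber sequence $M_2V(0)\to L_{K(2)}V(0)\to L_{K(1)}L_{K(2)}V(0)$ one finds $(E_2)_*M_2V(0)\cong \Sigma^{-1}(E_2)_*/(p,v_1^\infty)$, so the internal degree $t$ satisfies $t\equiv -1\pmod{2(p-1)}$ and the $0$-line sits in stems $\equiv -1$. After the further shift by $|v_1|m-1$ from the $m$-th Moore subquotient, the $0$-line already lands in stems $\equiv -2$ of $M_2P_1$, and stems $\equiv -1$ receive only contributions from $s\equiv 2p-3\pmod{2(p-1)}$, which is impossible for $p\ge 5$ since the cohomological dimension of $\mathbb{G}_2$ is $4$. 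Thus both assertions of the lemma hold \emph{already at the $E_1$-page of the AHSS} by sparseness; no differentials are needed. This is exactly how the paper argues: it passes from $V(1)$ (where $t\equiv 0$ and the relevant stems $\equiv 0,1$ carry only $v_2^a$ or nothing) to $V(0)$ and then to $P_1$, citing the data in \cite{bk2}.

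Because of the missed shift, your proposed mechanism is also internally inconsistent. With your placements, stem $\equiv -2$ in $M_2P_1$ receives only $\mathrm{Ext}^1$ classes, yet you conclude that only the $0$-line survives there; no AHSS differential can produce $\mathrm{Ext}^0$ classes out of $\mathrm{Ext}^1$ input. Moreover, the ``$p$-attaching within each Moore piece'' cannot furnish a $d_1$ in the AHSS whose subquotients are already the Moore spectra --- that attaching map is absorbed into the $E_1$-term. Once you correct the shift, the entire differential analysis (and the discussion of $q(x)$ and the $q_i$) becomes superfluous: for $p\ge 5$ the vanishing and the concentration on the $0$-line are forced by congruence considerations alone.
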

\begin{proof}
We will use AHSS to compute the $E_2$ term of the ANSS for $M_2P_1$.

Since there is a filtration on $P_1$, with subquotients $\Sigma^{2(p-1)m-1}M(p)$, (with $M(p)$ the Moore complex), the $E_1$-term of the AHSS are the $E_2$-term of the monochromatic layer of the Moore complex.

We will use the data for the Moore complex in \cite{bk2}. First we start with the Smith-Toda complex $V(1)$.
We find that the only terms in dimensions equivalent to $0,1$ modulo $2(p-1)$ in the $E_2$-term of the ANSS for $M_2V(1)$ are the terms $v_2^a$. Hence, using the fact that $|v_1|=2(p-1)$, we conclude that on dimensions equivalent to $-1,0$ modulo $2(p-1)$, the only terms in the $E_2$-term of the ANSS of $M_2V(0)$ are the terms $\frac{v_2^a}{v_1^b}$. Hence in the $E_1$-term of the AHSS for the $E_2$-term of ANSS of $M_2P_1$, the only terms in the relevant dimensions are as claimed. 
\end{proof}

So we find that, when $p\geq5$, there are no obstructions coming from higher ANSS differentials, because the $\beta$ family lies in dimensions equivalent to $-2$ modulo $2(p-1)$, and there are no terms with higher homological filtration in the ANSS of $M_2P_1$. Hence Theorem \ref{k2hi} follows from Theorem \ref{hlhi}.

\begin{rem}
In fact, for $p\geq5$, the ANSS for $L_{K(2)}P_1$ collapses at the $E_2$-term. So all the AHSS differentials come from ANSS $d_1$-differentials, which are algebraic.
\end{rem} 

\section{Passage to the actual Hopf invariant} \label{ah}

We have given a method to understand the monochromatic Hopf invariant. The natural question is, how can we get information on the actual Hopf invariant. Here is an example:
\begin{ex}
We have shown that, for $p\geq5$, the monochromatic Hopf invariant of $\beta_1$ is $\beta_0$, which is represented by $\frac{1}{pv_1}$. This element lies on the $(p+1)$-st Moore complex. We know that it is killed in the chromatic spectral sequence by the element $\frac{v_1^{-1}}{p}$ in chromatic height $1$. From \cite{mt92}, this supports an AHSS differential to $\frac{h_0}{p}$ on the $(p-1)$-st Moore complex. So we conclude that the actual stable Hopf invariant of $\beta_1$ is $\alpha_1$, which is well-known.
\end{ex}

So we see that we can combine our knowledge on the chromatic spectral sequence to understand the actual Hopf invariant.
However, in order for this kind of argument to work, we still need an unstable analogue of the chromatic spectral sequence.

We can begin the unstable analogue as follows. Start with a topological space $X$. (Localize it at $p$ first.) We invert $p$ to get a map $X\rightarrow p^{-1}X$. We take $N_0$ to be the fiber of this map. Then we invert $v_1$ to get $N_0\rightarrow \Omega^\infty \Phi_1N_0$. Then we take $N_1$ to be its fiber. And then we invert $v_2$ on $N_1$, and so on.
This is precisely the same procedure for the stable case if we start with an infinite loop space. The $K(n)$-local version is completely analogues: we just replace the functor $\Phi_n$ by $\Phi_{M(n)}$. When the stable chromatic tower converges, we could conclude that we can recover the actual stable Hopf invariant using the monochromatic information.

However, even with such an unstable chromatic spectral sequence, there are still ambiguities. This comes from the fact that the filtrations in the chromatic spectral sequence and the Atiyah-Hirzebruch spectral sequence are different. So there is possibility that the same element might have different choice among its terms as the leading term. The effect is that, the above calculated candidate is either the actual Hopf invariant, or the actual one has a higher chromatic height, living on a higher cell. So we only have a lower bound on the sphere of origin. In favorable cases, we have constructions to achieve the lower bound, and then we can say the Hopf invariant is the computed value.

\begin{ex}
Let $p\geq5$. We know that $\beta_1$ can be desuspended to $S^{2p-1}$. We suspend it to the sphere $S^{2p+3}$. This is in the $p$-primary stable range. It is not hard to see that we can extend it to a map $V(1)\rightarrow S^{2p+3}$ in which $V(1)$ is a desuspension of the Smith-Toda complex with bottom cell in dimension $2p^2+1$, and the restriction of the map to the bottom cell is $\beta_1$. Since this is already in the stable range, the map $v_2$ desuspends, so we can construct maps $v_2^k:\Sigma^{k|v_2|}V(1)\rightarrow V(1)$. Composing the maps $\beta_1$ and $v_2^k$, we conclude that $\beta_k$ desuspends to $S^{2p+3}$ and the stable Hopf invariant is $\beta_{k-1}$, for $k\geq2$. This agrees with \cite{ben86}.

Note that how the above arguments fail for lower spheres. In the case of $S^{2p-1}$, $2\beta_1$ is not zero. This element, with Hopf invariant $\alpha_2$, is killed in the EHP sequence by the element $id$ on $S^{2p}$. In the case of $S^{2p+1}$, the Toda bracket $<\alpha_1,2,\beta_1>$ does not vanish, with Hopf invariant $\alpha_1$. This element is killed in the EHP sequence by the element $id$ on $S^{2p+2}$. Similarly on $S^{2p+2}$, we have the Toda bracket $<2,\alpha_1,2,\beta_1>$ not zero with Hopf invariant $2$. So in either cases, the element $\beta_1$ cannot be extended to $V(1)$.
\end{ex}

\end{document}